\numberwithin{equation}{section} \numberwithin{figure}{section}
\def\Re{\mathbb{R}}
\def\n{\noindent}
\def\d{\displaystyle}
\def\b#1{\overline#1}
\def\bga{\begin{array}}
\def\eda{\end{array}}
\def\De{\Delta}
\def\pt{\partial}
\def\ev{\equiv}
\def\gm{\gamma}
\def\sg{\sigma}
\def\bbf{\mathbf{f}}
\def\bu{\mathbf{u}}
\def\bA{\mathbf{A}}
\def\bF{\mathbf{F}}
\def\bG{\mathbf{G}}
\def\bP{\mathbf{P}}
\def\al{\alpha}
\def\td{\tilde}
\def\rw{\rightarrow}
\def\om{\omega}
\def\d{\displaystyle}
\def\dfr#1#2{\displaystyle{\frac{#1}{#2}}}
 \newtheorem{thm}{Theorem}[section]
 \newtheorem{cor}[thm]{Corollary}
 \newtheorem{prop}[thm]{Proposition}
 \theoremstyle{definition}
 \theoremstyle{remark}
 \newtheorem{rem}[thm]{Remark}
\title{Thermodynamical Effects and High Resolution Methods for Compressible Fluid Flows}
\thanks{  }
\author{  Jiequan Li  and Yue Wang }
\address{
 Laboratory of Computational Physics, Institute of Applied Physics and Computational Mathematics, Beijing, 100088, China  }
\email{
  Jiequan Li:  li\_jiequan@iapcm.ac.cn; Yue Wang: wang\_yue@iapcm.ac.cn
}
\begin{document}



\begin{abstract}
One of the fundamental differences of compressible fluid flows from incompressible fluid flows is the involvement of  thermodynamics. This difference should be manifested  in the design of numerical methods and seems often be neglected in addition that the entropy inequality, as a conceptual derivative, is taken into account to reflect  irreversible processes and verified for some first order schemes.  In this paper, we refine  the GRP solver to illustrate how the thermodynamical variation is integrated into the design of high resolution methods for compressible fluid flows and demonstrate numerically the importance of thermodynamic effect in the resolution of strong waves. As a by-product, we show that the GRP solver works for generic equations of state, and is independent of technical arguments.

\end{abstract}
\maketitle

 {\bf Key words:}{ \small  GRP solver, Thermodynamical  effects, Entropy,  Riemann invariants, Kinematic-thermodynamic variables,
Nonlinear geometrical optics}  \


\pagestyle{myheadings} \thispagestyle{plain} \markboth{
   JIEQUAN LI and YUE WANG} { Thermodynamical Effects and High Resolution Methods for Compressible Fluid Flows  }

\section{Introduction}\label{sec1}

In the study of compressible fluid flows, the thermodynamical (Gibbs) relation
\begin{equation}
Tds =de -\dfr{p}{\rho^2} d\rho,
\label{Gibbs}
\end{equation}
has always the fundamental importance,
where  $T$ is the temperature, $s$ is the entropy, $e$ is the internal energy, $p$ is the pressure and $\rho$ is the density. In general, the internal energy includes the static energy, chemical energy in the field of combustion, and stress tensor in elasticity etc.
This relation  distinguishes compressible fluid flows from incompressible fluid ones.  The more compressible the flows are, the more dominant role it needs to play. This feature should be manifested in the design of numerical methods  in order to guarantee  resulting numerical solutions obey the same or at least an approximate analogue.
 \vspace{0.2cm}

Let us  recall the (first order) Godunov scheme  \cite{Godunov} for inviscid compressible Euler equations to   roughly inspect how the thermodynamics works for numerical methods  since it is the reference of any first order numerical schemes and has become the foundation of modern CFD. The  Godunov scheme  assumes,  as most first order finite volume schemes do,  that the physical state is uniform over each computational cell at each time step and the flow variation is described through the jumps of states across neighboring cell boundaries. The cellwise uniformity of the initial data implies that the resulting rarefaction waves emanating from the jumps are always isentropic. Moreover, the local  self-similarity of the solution implies that the entropy is constant along each cell boundary.  It turns out that no thermodynamical process is included in numerical fluxes. It does not matter if the thermodynamical effect is weak and the dynamical process is not severe, just as exhibited in many popular numerical examples in literature, e.g. \cite{Sod}.  However,  once the thermodynamical process becomes significant, an effective numerical scheme has to include the
thermodynamical variation. Otherwise, it might have some defects, as shown in \cite{Tang-Liu} for the problem of  large ratio of density or pressure.  This is insurmountable in the framework of first order schemes or higher order accurate schemes when first order numerical flux (Riemann, approximate Riemann) solvers are adopted.
\vspace{0.2cm}

In this paper, we will demonstrate how the thermodynamical effect is integrated into  high order accurate numerical schemes through the study of the generalized Riemann problem (GRP).   Given piecewise polynomials of high degree as initial data, all waves from the initial jumps at cell boundaries are curved and in particular rarefaction waves are no longer isentropic. The  initial entropy variation activates the interaction between the  kinematical and thermodynamical quantities.
As the waves are sufficiently strong, the interaction cannot be neglected and
  the entropy variation should be plugged into numerical fluxes in order to precisely characterize the thermodynamical process, as described in the GRP approach in \cite{Ben2006}.  We recognize by careful inspection on the resolution of the GRP that the thermodynamical quantities play  a fundamental role in the following sense:

\begin{enumerate}
\item[(i)] The expansion of waves can be characterized  in terms of local sound speed alone;
\vspace{0.2cm}

\item[(ii)]  The  entropy variation rate across rarefaction waves only depends on the local sound speed;

\vspace{0.2cm}

\item[(iii)] The interaction of kinematic  and thermodynamical quantities is strongly influenced by the entropy  variation.

\end{enumerate}
These explain why  GRP-based numerical schemes work well for problems under extreme conditions (e.g. high temperature  and  high pressure etc.) and the thermodynamics plays an important role in the design of high resolution schemes.
  \vspace{0.2cm}

As another purpose, this paper aims to refine the GRP solvers that were derived before, such as the original GRP solver in \cite{Ben-Artzi-1984,Ben-Artzi-2003}, the Eulerian version \cite{Ben2006, Ben2007} and high order extension \cite{Qian-2014, Tang-Yang, Wang-Wang}.  In the development of the GRP solver,  an important progress was made in \cite{Li-Chen} where the Riemann invariants were introduced so that the GRP solver can be extended to general hyperbolic balance laws.
The current contribution emphasizes that the GRP solver is  independent of technical analyses although the tricky  technique of "nonlinear geometric optics" is applied to the singularity point. It turns out that the technicality has nothing to do with any specific equation of state and readers are relieved of the tedious description of local characteristic coordinates in the previous versions of GRP solver in \cite{Ben-Artzi-2003, Ben2006, Ben2007} even though all ingredients are already cooked there.  Thus the  conclusion is applicable beyond gas dynamics.

\vspace{0.2cm}

In order to  keep the clarity of our presentation, we confine  our discussion in the finite volume framework, following van Leer's philosophy \cite{Leer}.   In Section 2, we simply summarize  numerical methods for compressible fluid flows and relate them to the generalized Riemann problem (GRP). In Section 3,  we refine the arguments on the resolution of rarefaction waves and highlight the role of thermodynamics. The technique of ``nonlinear geometric optics" is applied to measure how rarefaction waves expand and how the thermodynamics takes effect on the kinematic quantities.  In Section 4, we resolve shocks via the singularity tracking. The GRP solver is summarized in Section 5. In Section 6, a numerical demonstration is given to emphasize the importance of thermodynamic effect included in numerical fluxes. In Section 7, more remarks are presented about the GRP solver.
 All notations we use are put in Table I in Appendix.

\vspace{0.2cm}

 \section{Set-up of  high order schemes and the generalized Riemann problem}

We  write the flow equations in general form,
 \begin{equation}
 \bu_t +\bbf(\bu)_x =\bG(x,\bu),   \ \ \  t>0,
 \label{balance}
 \end{equation}
 where $\bbf(\bu)$ is the flux function of physical vector $\bu$, $\bG(x,\bu)$ is the source term, $x$ is the spatial variable, $t$ is the temporal variable. The prototype of \eqref{balance} is the compressible Euler equations with external forces or geometrical effects,
 \begin{equation}
 \bu=(\rho,\rho u, \rho E)^\top,\ \ \ \ \bbf(\bu) =(\rho u,\rho u^2+p, u(\rho E+p))^\top,
 \end{equation}
where the primitive variables $\rho$, $u$, $p$ are density, velocity and pressure, respectively;  the total energy consists of the kinematic energy $u^2/2$ and the internal energy $e$,  $E =\frac{u^2}2+e$, the internal energy $e$ is defined through the Gibbs relation \eqref{Gibbs}.

 \vspace{0.2cm}

A high order finite volume scheme for \eqref{balance} assumes piecewise polynomials of degree $k$  at each time step $t=t_n$  over each computational cell $I_j=(x_{j-\frac 12},x_{j+\frac 12})$,
\begin{equation}
\bu(x,t_n) =\bP_j^{k,n}(x), \ \ \  x\in I_j.
\label{data}
\end{equation}
where $x_j=j \De x$, $x_{j+\frac 12} =\frac 12(x_j+x_{j+1})$,  and $\De x$ is the spatial mesh size. Let $\De t$ be the time  step size  and satisfy the usual CFL constraint.
The numerical solution is updated in two steps.
\begin{enumerate}
\item[(i)] {\em Average advancing.}  We advance the solution average  of \eqref{balance} and \eqref{data}  to the next time step according to the formula
\begin{equation}
\begin{array}{l}
\bu_j^{n+1} =\bu_j^n -\dfr{\De t}{\De x} [\bF_{j+\frac 12}^* -\bF_{j-\frac 12}^*] + \De t \bG_j^*, \\[3mm]
 \bu_j^n =\dfr{1}{\De x} \int_{x_{j-\frac 12}}^{x_{j+\frac 12}}\bu(x,t_n)dx,
 \end{array}
\label{balance-scheme}
\end{equation}
where $\bF_{j+\frac 12}^*$ is the  numerical flux at  the  cell boundary $x=x_{j+\frac 12 }$  and  $\bG_j^*$ is the proper evaluation of the source term over the control volume $I_j\times (t_n,t_{n+1})$,  $t_{n+1} =t_n +\De t$.
\vspace{0.2cm}

\item[(ii)] {\em Projection of data.}  We project the solution $\bu(x,t_{n+1}-)$ to the space of piecewise polynomials $\bu(x,t_{n+1}) =\bP_j^{k,n+1}(x)$, $x\in I_j$.
\end{enumerate}
\vspace{0.2cm}

This paper focuses on the approximation of flux function,
\begin{equation}
 \ \ \ \bF_{j+\frac 12}^* \approx  \dfr{1}{\De t}\int_{t_n}^{t_{n+1}} \bbf(\bu(x_{j+\frac 12},t))dt.
\label{flux-num}
\end{equation}
Such an approximation depends on the resolution of the  generalized Riemann problem (GRP) at each singularity point $(x_{j+\frac 12}, t_n)$,
\begin{equation}
\begin{array}{l}
 \bu_t +\bbf(\bu)_x =\bG(x,\bu), \ \ \  x\in (x_{j-\frac 12},x_{j+\frac 32}),   \ \ \ t_n<t<t_{n+1}, \\[3mm]
 \bu(x,t_n) =\left\{
 \begin{array}{ll}
 \bP_j^{k,n}(x), \ \ \ &x\in I_j,\\
 \bP_{j+1}^{k,n}(x), & x\in I_{j+1}.
 \end{array}
 \right.
 \end{array}
 \label{GRP-1}
\end{equation}
We shift  $(x_{j+\frac 12},t_n)=(0,0)$ and  denote
\begin{equation}
\begin{array}{c}
 \bu_L =\bP_j^{k,n}(0_-),  \ \ \ \ \bu_R =\bP_{j+1}^{k,n}(0_+),\\[3mm]
 \bu_L' =\dfr{d}{dx}\bP_j^{k,n}(0_-), \ \ \  \bu_L' =\dfr{d}{dx}\bP_{j+1}^{k,n}(0_+).
\end{array}
 \end{equation}
The task of the  {\em GRP solver}  aims to evaluating the value $\bu(0,t)$ along the cell boundary $x=0$,
 \begin{equation}
 \bu_* =\lim_{t\rw 0+} \bu(0,t), \ \ \ \ \left(\dfr{\pt \bu}{\pt t}\right)_* =\lim_{t\rw 0+} \dfr{\pt \bu}{\pt t}(0,t).
 \end{equation}
 Once these are available, we evaluate
 \begin{equation}
 \bu(x_{j+\frac 12},t_n+\dfr{\De t}2) = \bu_* +\dfr{\De t} 2 \left(\dfr{\pt \bu}{\pt t}\right)_* +\mathcal{O}(\De t^2),
 \end{equation}
 and the flux across each interface $x=x_{j+\frac 12}$  is approximated as
 \begin{equation}
 \bF_{j+\frac 12}^* := \bbf( \bu(x_{j+\frac 12},t_n+\dfr{\De t}2) ),
 \end{equation}
 or
 \begin{equation}
  \bF_{j+\frac 12}^* := \bbf(\bu_*)+ \dfr{\De t}2\dfr{\pt\bbf(\bu_*)}{\pt \bu}\left(\dfr{\pt \bu}{\pt t}\right)_*,
 \end{equation}
 within second order accuracy. We remark that we are satisfied with the second order accurate GRP solver because it is sufficient to be a building block so as to construct multi-stage higher order schemes adopting the strategy in \cite{Li-Du}.    \vspace{0.2cm}

  Certainly, there are many approaches approximating the numerical flux through the technique of interpolation. The numerical example in Section \ref{sec-num}  shows  that any interpolation should reflect the thermodynamical effect precisely once the   effect becomes prominent. This is not a trivial task.  Instead of plausible interpolations, we want to see the exact information along the cell boundary so that  the numerical flux can be properly constructed.
 \vspace{0.2cm}

The resolution of the GRP is closely related to the associated Riemann problem with the asymptotic property, as sketched in the following proposition.

\begin{prop} Let $\bu(x,t)$ be the solution of \eqref{GRP-1}, and $\bu^A(x,t) =R^A(x/t; \bu_L,\bu_R)$ be the solution to the associated Riemann problem,
\begin{equation}
\begin{array}{l}
 \bu^A_t +\bbf(\bu^A)_x =0, \ \ \ x\in \Re, \ \ \   t>0,\\[3mm]
\bu^A(x,0) =\left\{
\begin{array}{ll}
\bu_L, \ \ \ &x<0,\\[3mm]
\bu_R , &x>0.
\end{array}
\right.
\end{array}
\label{RP}
\end{equation}
Then at the singularity point $(0,0)$, there holds
\begin{equation}
\lim_{t\rw 0+} \bu(\al t, t) =R^A(\al; \bu_L, \bu_R).
\end{equation}
for any $\al\in \Re$.
\end{prop}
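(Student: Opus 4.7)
The plan is to exploit the finite speed of propagation together with the self-similarity of the Riemann problem via a parabolic-style rescaling centered at the singularity $(0,0)$. Specifically, for any $\epsilon>0$ I introduce the rescaled function
\begin{equation*}
\bu^\epsilon(x,t) := \bu(\epsilon x,\epsilon t),
\end{equation*}
and observe that $\bu^\epsilon$ satisfies the perturbed balance law
\begin{equation*}
\bu^\epsilon_t + \bbf(\bu^\epsilon)_x = \epsilon\,\bG(\epsilon x,\bu^\epsilon),
\end{equation*}
with initial data $\bu^\epsilon(x,0) = \bP_j^{k,n}(\epsilon x)$ for $x<0$ and $\bu^\epsilon(x,0) = \bP_{j+1}^{k,n}(\epsilon x)$ for $x>0$. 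The point is that the rescaling preserves the conservative structure, attenuates the source term by a factor of $\epsilon$, and compresses the polynomial initial data toward the cell boundary.

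Next I would pass to the limit $\epsilon\to 0^+$. Since $\bP_j^{k,n}$ and $\bP_{j+1}^{k,n}$ are polynomials, continuity at $0^\pm$ yields the pointwise (and locally uniform away from $x=0$) convergence
\begin{equation*}
\bu^\epsilon(x,0) \;\longrightarrow\; \begin{cases} \bu_L, & x<0,\\ \bu_R, & x>0,\end{cases}
\end{equation*}
while $\epsilon\,\bG(\epsilon x,\bu^\epsilon)\to 0$. Using the admissibility and stability of the solution operator for the homogeneous system $\bu_t+\bbf(\bu)_x=0$ under such initial-data perturbations (finite-speed propagation restricts the analysis to a bounded domain, and in that bounded domain the Riemann solver is stable with respect to $L^1$/BV initial perturbations of the cellwise polynomial data), one concludes that $\bu^\epsilon$ converges to a weak admissible solution of the homogeneous Cauchy problem with initial states $\bu_L,\bu_R$. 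By uniqueness of the admissible Riemann solution, this limit equals $\bu^A(x,t)=R^A(x/t;\bu_L,\bu_R)$.

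Evaluating the convergence $\bu^\epsilon(\alpha,1)\to \bu^A(\alpha,1)$, and noting that $\bu^\epsilon(\alpha,1)=\bu(\epsilon\alpha,\epsilon)$ while $\bu^A(\alpha,1)=R^A(\alpha;\bu_L,\bu_R)$ by self-similarity, the identification $\epsilon=t$ yields
\begin{equation*}
\lim_{t\to 0^+}\bu(\alpha t,t) \;=\; R^A(\alpha;\bu_L,\bu_R),
\end{equation*}
which is the claim. The main technical obstacle is the passage to the limit in the nonlinear flux term, which requires the stability of the admissible solution to the Riemann problem under these particular (piecewise polynomial, asymptotically piecewise constant) initial-data perturbations. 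For the Euler prototype, or more generally for strictly hyperbolic systems with genuinely nonlinear or linearly degenerate fields and data for which the classical Riemann solver is well defined, this stability is standard; it can be established by wave tracking of the underlying wave fan, or by invoking $L^1$-contraction-type estimates in the small BV regime. Outside of these regimes, one must simply \emph{assume} that the associated Riemann problem is classically solvable, which is exactly the working hypothesis in the subsequent resolution of rarefaction and shock waves.
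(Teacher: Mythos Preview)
The paper does not actually supply a proof of this proposition: it is stated as a known asymptotic fact about the generalized Riemann problem and then immediately used (``With this proposition, we have $\bu_*=R^A(0;\bu_L,\bu_R)$''). The result is treated as background, with the implicit reference being the earlier GRP literature cited in the paper (in particular \cite{Ben-Artzi-2003, Ben2007}).

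Your rescaling argument is the standard heuristic for this statement and is essentially how it is justified in those references: the blow-up $\bu^\epsilon(x,t)=\bu(\epsilon x,\epsilon t)$ kills both the source term and the higher-order part of the piecewise-polynomial data, leaving the homogeneous Riemann problem in the limit. The one place where you should be more careful is the ``evaluating the convergence at $(\alpha,1)$'' step: convergence of $\bu^\epsilon$ to $\bu^A$ is naturally in $L^1_{\mathrm{loc}}$ (or in the sense of distributions), not pointwise, so to extract $\lim_{t\to0^+}\bu(\alpha t,t)=R^A(\alpha;\bu_L,\bu_R)$ for a \emph{fixed} ray one needs the additional information that the GRP solution is, near the origin, a smooth perturbation of the self-similar wave fan away from the discontinuity curves. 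That is precisely the structural hypothesis under which the subsequent sections operate (smooth rarefaction fan, isolated shock/contact curves), so your caveat in the final paragraph is the right one. With that understood, your sketch is a correct and more explicit justification than what the paper itself offers.
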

\vspace{0.2cm}

\begin{figure}[!htb]
\centering \subfigure[Wave pattern for the GRP. The initial data
$\bu(x,0)= \bu_L+x\bu_L'$ for $x<0$ and $\bu(x,0)=\bu_R+\bu_R' x$ for $x>0$. ]{
\psfrag{0}{$0$}
\psfrag{x}{$x$}\psfrag{t}{$t$}\psfrag{betal}{\small
$\beta=\beta_L$}\psfrag{ul}{$\bu_L$}\psfrag{al2}{$\bar{\bar\al}$}
\psfrag{shock}{shock}\psfrag{rarefaction}{rarefaction}\psfrag{uminus}{$\bu_-(x,t)$}\psfrag{uplus}{$\bu_+(x,t)$}
\psfrag{u1}{$\bu_{1*}$}\psfrag{u2}{$\bu_{2*}$}\psfrag{al=albar}{$\al=\bar\al$}\psfrag{xbar}{$\bar
\al$ }\psfrag{al=2albar}{$\al=\bar{\bar \al}$}\psfrag{ur}{$\bu_R$}
\psfrag{contact}{contact}\psfrag{betastar}{$\beta=\beta_*$}
\psfrag{ul}{$\bu_L$}\psfrag{ur}{$\bu_R$}\psfrag{ustar}{$\bu_*$}
\includegraphics[clip, width=9cm]{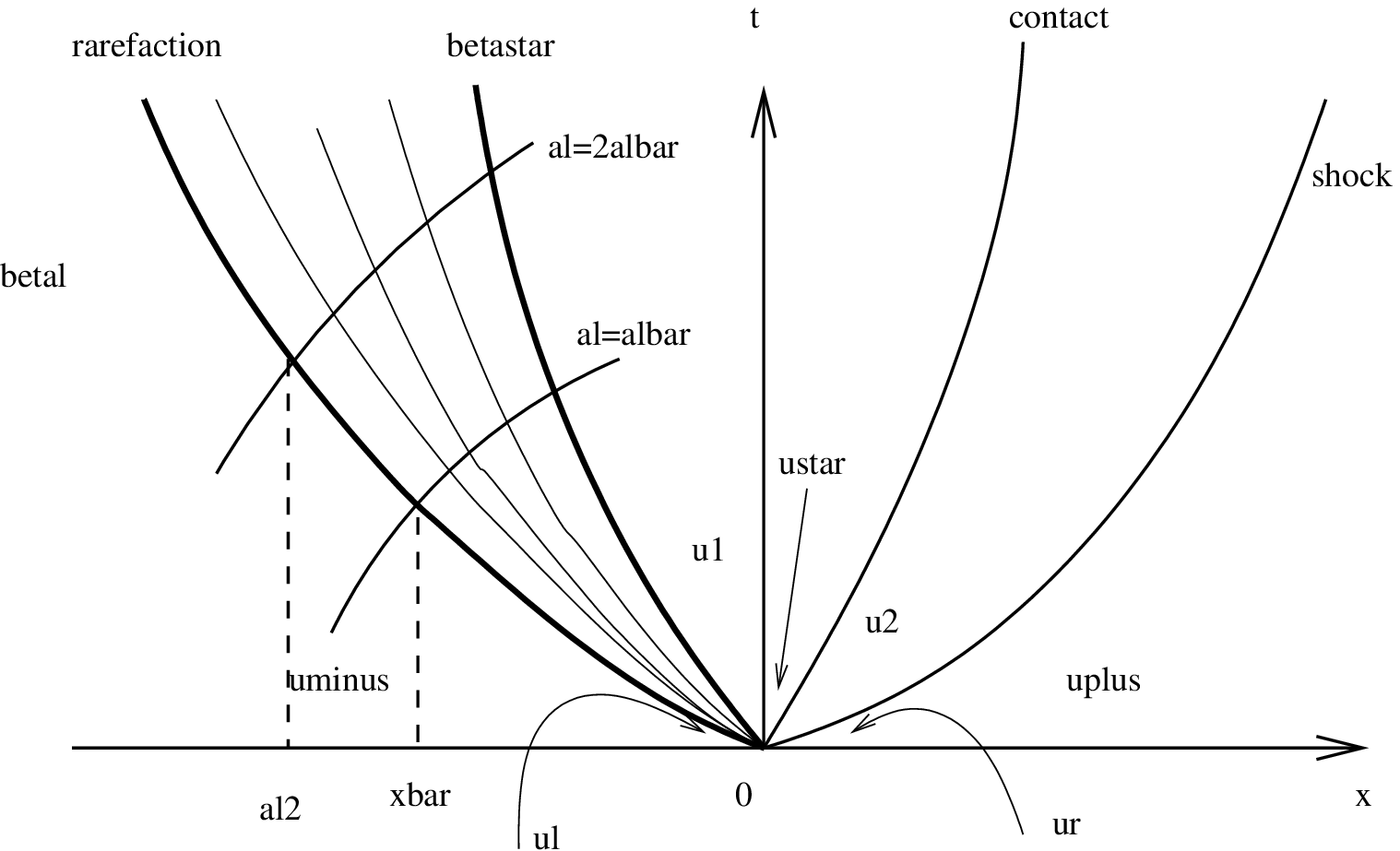} }
\hspace{1cm}

 \subfigure[Wave pattern for the associated Riemann problem]{
\psfrag{x}{$x$}\psfrag{t}{$t$}\psfrag{betal}{\small
$\beta=\beta_L$}
\psfrag{shock}{shock}\psfrag{rarefaction}{rarefaction}\psfrag{uminus}{$\bu_L$}\psfrag{uplus}{$\bu_R$}
\psfrag{u1}{$\bu_{1*}$}\psfrag{u2}{$\bu_{2*}$}\psfrag{al=albar}{$\al=\bar\al$}\psfrag{xbar}{$\bar
\al$ }\psfrag{al=2albar}{$\al=\bar{\bar \al}$}
\psfrag{contact}{contact}\psfrag{betastar}{$\beta=\beta_*$}\psfrag{ustar}{$\bu_*$}
\psfrag{0}{$0$}\psfrag{al2}{$\bar{\bar \al}$}
\includegraphics[width=9cm]{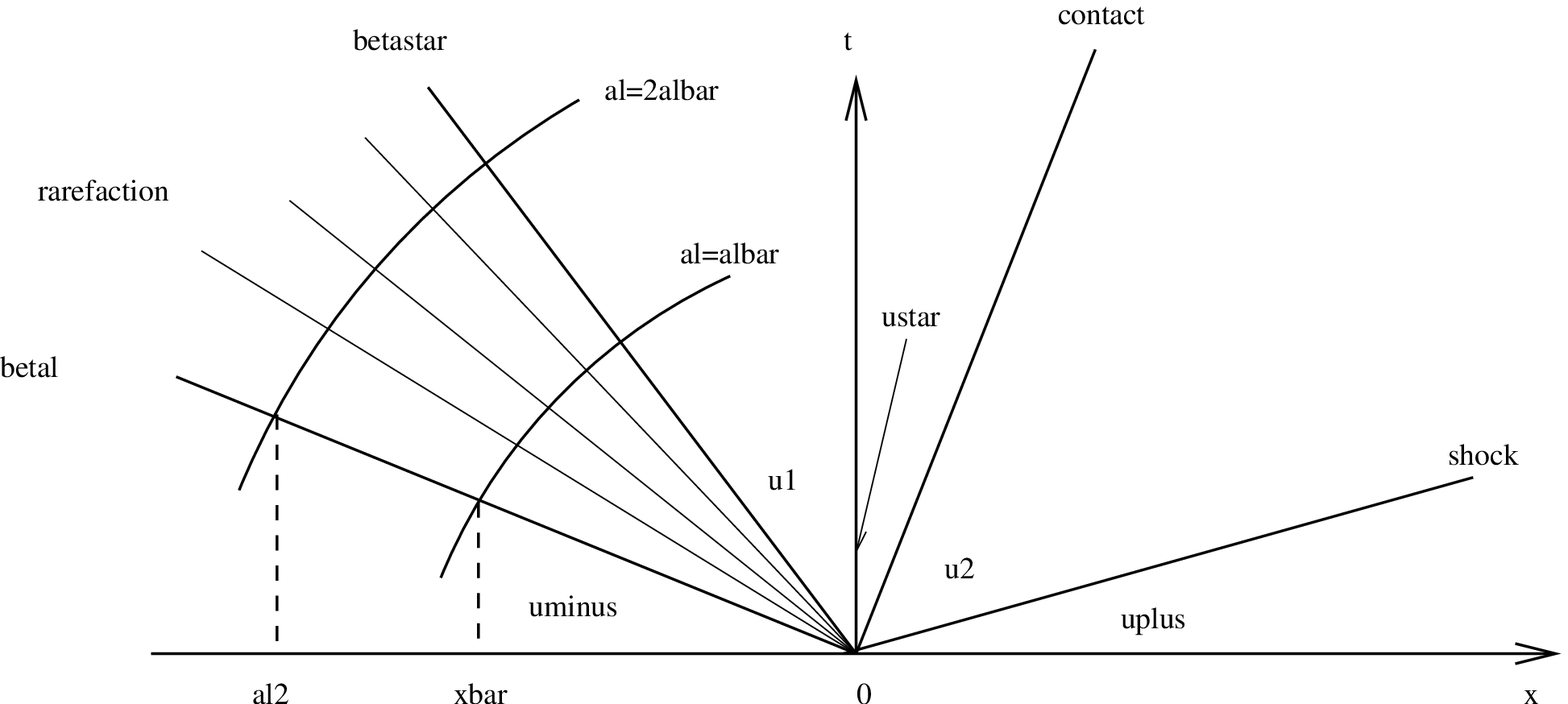}}
\caption[small]{Typical wave pattern for the generalized Riemann problem}
\label{Fig-wave}
\end{figure}

With this proposition, we have
\begin{equation}
\bu_* =R^A(0; \bu_L,\bu_R).
\end{equation}
Although the Riemann solution $\bu^A(x,t)$ is self-similar,  all waves in the solution of GRP are curved. In particular,  the curved rarefaction wave is no longer isentropic, and the initial variation of entropy should be carefully quantified, which is the key to plug the thermodynamical effect into the numerical flux.

\vspace{0.2cm}

  Without loss of generality,  we assume that a rarefaction wave emanating from $(0,0)$ moves to the left,  a shock moves to the right and the $t$-axis is located inside the intermediate region, as shown in Figure \ref{Fig-wave} \footnote{If all waves move to one side of $x=0$, the GRP is solved upwind.}.  The rarefaction wave is associated with the characteristic field $u-c$, and the transversal characteristic field is $u+c$, where $u$ is the flow velocity  and $c$ is the local sound speed, $c^2 =\frac{\pt p(\rho, s)}{\pt \rho}$.


 \section{ Resolution of rarefaction waves }

In this section we will refine the arguments on the resolution of rarefaction waves in \cite{Ben2006} although all ingredients are already cooked there. The point here is to emphasize that all arguments are independent of the specific equation of state and highlight the thermodynamic effect.
\vspace{0.2cm}

 \subsection{The  thermodynamical effect on the  expansion of rarefaction waves}

In order to understand how the rarefaction wave expands. we resolve the singularity at the origin using a technique of ``nonlinear geometrical optics" and come to conclusion that the local sound speed determines the expansion.  The  involvement of characteristic coordinates is crucial, but the conclusion is independent of technical arguments.
 \vspace{0.2cm}

  Let $\al(x,t)=C_1$ and $\beta(x,t)=C_2$ be the integral curves, respectively, of
  \begin{equation}
  \dfr{dx}{dt} =u+c,  \ \ \ \dfr{dx}{dt} =u-c.
  \end{equation}
 Away from the vacuum and the singularity point $(0,0)$,  there is a one-to-one correspondence $(x, t)\rightarrow(\alpha,\beta)$ such that,
    \begin{align}
  &\frac{\pt \alpha}{\pt x}dx+\frac{\pt \alpha}{\pt t}d t=0, \notag\\
  &\frac{\pt \beta}{\pt x}dx+\frac{\pt \beta}{\pt t}d t=0. \notag
  \end{align}
 Regarding the inverse $(\al,\beta)\rw (x,t)$,  we have
  \begin{align}
  \dfr{\pt x}{\pt \alpha}=(u-c)\dfr{\pt t}{\pt \alpha},  \  \  \ \dfr{\pt x}{\pt \beta}=(u+c)\dfr{\pt t}{\pt \beta}.
  \label{x-tau}
  \end{align}
  In the following discussion, we often use both pairs of  independent  variables $(x,t)$ or $(\al,\beta)$. For example, as we use $(\al, \beta)$ as independent variables, we allude to the fact that $(x,t)=(x(\al,\beta),t(\al,\beta))$, if no confusion is caused.
  \vspace{0.2cm}

  Thanks to the  asymptotics  of the GRP to the associated Riemann problem at the singularity point, we denote by $\beta_L$ the slope of the wave head, by $\beta$ the speed of the  wave speed inside the rarefaction wave, and by $\beta_*$ the speed of wave tail.   Then we have the following fact.
 \vspace{0.2cm}

\begin{prop}\label{prop-singularity} Consider the curved rarefaction wave associated with $u-c$ and denote by  $\Theta(\beta):=\frac{\pt t}{\pt \al}(0,\beta)$.  Then we have
\begin{equation}
\dfr{\Theta(\beta)}{\Theta(\beta_L)} = \exp\left[\int_{\beta_L}^\beta\dfr{1}{2c(0,\xi)}d\xi\right]=:\Pi(c; \beta,\beta_L).
\label{t-al-ratio1}
\end{equation}
In particular, for polytropic gases, we have
\begin{equation}
\dfr{\Theta(\beta)}{\Theta(\beta_L)} = \left(\dfr{c(0,\beta)}{c_L}\right)^{\frac{1}{2\mu^2}}, \ \ \ \mu^2=\frac{\gm-1}{\gm+1}.
\label{t-al-ratio2}
\end{equation}
\end{prop}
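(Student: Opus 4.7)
The plan is to derive an ordinary differential equation for $\Theta(\beta)$ by combining the integrability of the characteristic chart $(\alpha,\beta) \mapsto (x,t)$ with the self-similar structure of the flow at the singularity, then integrate. First I would invoke the mixed partial compatibility $x_{\alpha\beta} = x_{\beta\alpha}$ applied to \eqref{x-tau}: differentiating $x_\alpha = (u-c) t_\alpha$ by $\beta$ and $x_\beta = (u+c) t_\beta$ by $\alpha$ and equating yields the intrinsic identity
\[
2 c\, t_{\alpha\beta} = (u-c)_\beta\, t_\alpha - (u+c)_\alpha\, t_\beta,
\]
which is independent of the equation of state and valid wherever $(\alpha,\beta)$ is a smooth chart.

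Next I would specialize to the rarefaction fan by passing to the self-similar limit at the origin. By the Proposition just preceding, the flow data converge to the centered Riemann fan $R^A(x/t;\bu_L,\bu_R)$ along every ray through $(0,0)$, so within the fan the state is asymptotically constant along each ray $\beta = \mathrm{const}$. Normalising $\beta$ so that it coincides with the ray slope $u-c$ yields $(u-c)_\beta = 1$, while constancy of the state along the ray gives $(u+c)_\alpha \to 0$ in the limit. The compatibility identity therefore collapses to
\[
2 c(0,\beta)\, \partial_\beta t_\alpha = t_\alpha,
\]
and restricting to the locus $(0,\beta)$ produces the linear ODE
\[
\frac{d\Theta}{d\beta} = \frac{\Theta(\beta)}{2 c(0,\beta)}.
\]
Integrating from $\beta_L$ to $\beta$ is immediate and gives the exponential formula \eqref{t-al-ratio1}.

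For the polytropic case I would use that the Riemann invariant of the transverse $u+c$ family, $u + 2c/(\gamma-1)$, is constant across the fan. Combined with $\beta = u-c$ this gives $\beta + \tfrac{\gamma+1}{\gamma-1} c = \text{const}$, so $c$ is affine in $\beta$ with slope $-\mu^2$. Changing variable $dc = -\mu^2\, d\beta$ in the integral $\int d\xi/(2c)$ turns the exponential into a power of $c(0,\beta)/c_L$ with exponent $\pm 1/(2\mu^2)$, yielding \eqref{t-al-ratio2}.

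The main obstacle is the singularity itself: the chart $(\alpha,\beta)$ actually degenerates at $(0,0)$ because the entire fan of $u-c$ characteristics emanates from that single point, so partial derivatives like $t_\alpha$ cannot be evaluated there naively. This is precisely where the ``nonlinear geometric optics'' rescaling enters; one must verify (i) that $\Theta(\beta) = \lim_{\alpha\to 0^+} t_\alpha(\alpha,\beta)$ exists and is strictly positive, (ii) that $(u+c)_\alpha$ actually tends to zero along a ray (not merely stays bounded), so the cross term in the compatibility identity drops out in the limit, and (iii) that the resulting ODE admits the head value $\Theta(\beta_L)$ as a well-defined initial condition. All three rest on the asymptotic equivalence of the GRP solution to the associated Riemann fan along rays through the origin, stated in the preceding proposition, together with the fact that in the Riemann fan the state is literally constant along rays; once these are in hand, the algebra above is routine.
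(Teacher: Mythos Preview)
Your overall architecture matches the paper's proof exactly: derive the compatibility identity $2c\,t_{\alpha\beta} = (u-c)_\beta\,t_\alpha - (u+c)_\alpha\,t_\beta$ from \eqref{x-tau}, normalize so that $(u-c)_\beta(0,\beta)=1$, reduce to the linear ODE $\Theta'(\beta)=\Theta(\beta)/(2c(0,\beta))$, and integrate. The polytropic reduction via the transverse Riemann invariant is also the intended route.

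There is, however, one genuine slip in how you kill the cross term. You argue that $(u+c)_\alpha\to 0$ at the singularity because the state is constant along rays in the associated Riemann fan, and you flag this as item (ii) among the obstacles. That claim is false for the GRP: the whole point of the generalized problem is that the initial slopes make quantities like $(u+c)$ vary along the transversal characteristic even at $\alpha=0$; indeed the paper's subsequent computation of $\partial\psi/\partial\alpha(0,\beta)$ (cf.\ \eqref{psi-ini}, \eqref{psi-al-value}) shows it is generically nonzero. What actually makes the cross term vanish is the other factor: since every $\beta$-characteristic in the fan emanates from the origin, one has $t(0,\beta)\equiv 0$ identically, hence $t_\beta(0,\beta)\equiv 0$ exactly. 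The paper therefore only requires $(u+c)_\alpha$ to be \emph{bounded}, not zero, and the product $(u+c)_\alpha\,t_\beta$ drops out for that reason. Once you replace your item (ii) by the observation $t_\beta(0,\beta)=0$, the rest of your argument goes through verbatim and coincides with the paper's.
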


\begin{proof} From \eqref{x-tau}, we   obtain
\begin{equation}
\begin{array}{l}
\dfr{\pt^2 x}{\pt\al\pt \beta} =\dfr{\pt (u-c)}{\pt \beta}\dfr{\pt t}{\pt \al}+(u-c)\dfr{\pt^2 t}{\pt\al\pt\beta},\\[3mm]
\dfr{\pt^2 x}{\pt\al\pt \beta} =\dfr{\pt (u+c)}{\pt \al}\dfr{\pt t}{\pt \beta}+(u+c)\dfr{\pt^2 t}{\pt\al\pt\beta}.
\end{array}
\end{equation}
The subtraction of one from the other yields
\begin{equation}
2c\dfr{\pt^2 t}{\pt\al\pt\beta} =\dfr{\pt (u-c)}{\pt \beta}\dfr{\pt t}{\pt \al} -\dfr{\pt (u+c)}{\pt \al}\dfr{\pt t}{\pt \beta}.
\label{second-tau}
\end{equation}
We can set $t(0,\beta)=0$ and $\frac{\pt (u-c)}{\pt \beta}(0,\beta)=1$.  By noting that $\frac{\pt t}{\pt \beta}(0,\beta)\ev 0$ and
$\frac{\pt (u+c)}{\pt \al}$ is bounded, we obtain
\begin{equation}
\dfr{\pt}{\pt \beta}\Theta(\beta) =\dfr{1}{2c(0,\beta)} \Theta(\beta).
\label{Theta-eq}
\end{equation}
Integrating from $\beta_L$ to $\beta$ yields \eqref{t-al-ratio1}.  For the polytropic gases,  we have \eqref{t-al-ratio2}.
\end{proof}

\begin{rem} This proposition characterizes how the rarefaction wave expands near the singularity point  in terms of the characteristic coordinate $\al$. The local sound speed $c$ solely determines the degree of expansion.
\vspace{0.2cm}

\end{rem}

\subsection{ The rate of entropy variation across curved rarefaction waves}

Now we want to investigate how  the entropy varies across the curved rarefaction wave associated with $u-c$. The entropy function $s$ just keeps constant along any specific particle trajectory
\begin{equation}
\dfr{\pt s}{\pt t} +u \dfr{\pt s}{\pt x}=0.
\label{entropy-eq}
\end{equation}
But it does not mean that the entropy is uniform in the whole regime of the  curved rarefaction wave fan.
Given the initial state in \eqref{data}, the initial variation of the entropy is  known thanks to the Gibbs relation \eqref{Gibbs},
  \begin{equation}
  T_L s_{L}' = e_{L}'-\dfr{p_L}{\rho_L^2} \rho_{L}',
  \label{en-ini}
  \end{equation}
  where the superscript ``prime" represents the derivative of corresponding variables with respect to $x$, the subscript ``L" denotes the limit from the left.  Then the entropy variation across  the rarefaction wave
has a rate only depending on the local sound speed $c$.

  \begin{prop}  Across  the curved rarefaction wave associated with $u-c$, the entropy variation $s_x(0,\beta)$ in the neighborhood of the singularity point has the change rate,
  \begin{equation}
  \dfr{ s_x(0,\beta)}{s_{L}'} =\dfr{c_L}{c(0,\beta)} \exp\left[\int_{\beta_L}^\beta- \dfr{1}{c(0,\xi)}d\xi\right]=\dfr{c_L}{c(0,\beta)} \cdot \Pi^{-2}(c;\beta,\beta_L).
  \label{s-al}
  \end{equation}
  For polytropic gases, it becomes
  \begin{equation}
  \dfr{T s_x(0,\beta)}{T_Ls_{L}'}  =\left(\frac{c(0,\beta)}{c_L}\right)^{\frac{1}{\mu^2}+1}.
  \label{s-al-poly}
  \end{equation}
    \end{prop}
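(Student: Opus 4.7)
The plan is to transform the entropy equation \eqref{entropy-eq} into the characteristic coordinates $(\alpha,\beta)$, and then exploit the compatibility of mixed partials $s_{\alpha\beta}=s_{\beta\alpha}$ together with the singularity information already extracted in Proposition \ref{prop-singularity} to derive a first-order linear ODE in $\beta$ for the trace $s_x(0,\beta)$.

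First I would rewrite \eqref{entropy-eq} in $(\alpha,\beta)$. By the chain rule and \eqref{x-tau},
\begin{equation*}
s_\alpha = s_x x_\alpha + s_t t_\alpha = [(u-c)s_x + s_t]\, t_\alpha = -c\, s_x\, t_\alpha,
\qquad
s_\beta = c\, s_x\, t_\beta,
\end{equation*}
where the cancellation of $u s_x$ comes from \eqref{entropy-eq}. Since $t(0,\beta)\equiv 0$, we have $t_\beta(0,\beta)=0$, and the second identity immediately gives $s_\beta(0,\beta)=0$; this is the analytic counterpart of the fact that the limit of $s$ along every $u-c$ characteristic in the fan at the singularity equals the single left value $s_L$.

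Next, I differentiate $s_\alpha=-c s_x t_\alpha$ in $\beta$ and $s_\beta=c s_x t_\beta$ in $\alpha$ and equate. At $\alpha=0$ the two terms multiplied by $t_\beta$ drop out, leaving
\begin{equation*}
-c_\beta s_x t_\alpha - c\, (s_x)_\beta\, t_\alpha - 2 c\, s_x\, t_{\alpha\beta}=0
\end{equation*}
evaluated at $(0,\beta)$. Dividing by $c\, t_\alpha = c\,\Theta(\beta)$ and substituting $t_{\alpha\beta}(0,\beta)/t_\alpha(0,\beta)=\Theta'(\beta)/\Theta(\beta)=1/(2c(0,\beta))$, which is exactly \eqref{Theta-eq}, collapses the identity to the linear ODE
\begin{equation*}
\frac{d}{d\beta}s_x(0,\beta)=-s_x(0,\beta)\left[\frac{c_\beta(0,\beta)}{c(0,\beta)}+\frac{1}{c(0,\beta)}\right].
\end{equation*}
To integrate it I supply the initial value at the wave head: since the head separates the curved fan from the unperturbed left region where $s$ is linear with slope $s_L'$, continuity of $s_x$ across this characteristic gives $s_x(0,\beta_L)=s_L'$. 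Integration from $\beta_L$ to $\beta$ then yields
\begin{equation*}
\frac{s_x(0,\beta)}{s_L'}=\frac{c_L}{c(0,\beta)}\exp\!\left[-\int_{\beta_L}^{\beta}\frac{d\xi}{c(0,\xi)}\right]=\frac{c_L}{c(0,\beta)}\cdot \Pi^{-2}(c;\beta,\beta_L),
\end{equation*}
which is \eqref{s-al}. The polytropic identity \eqref{s-al-poly} then follows by substituting the explicit $\Pi$ from Proposition \ref{prop-singularity} together with the state relation $T/T_L=(c(0,\beta)/c_L)^2$.

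The most delicate issue I anticipate is the degeneracy of the map $(\alpha,\beta)\mapsto(x,t)$ at $\alpha=0$: the whole line $\alpha=0$ collapses to the single point $(0,0)$, so $c(0,\beta)$, $s_x(0,\beta)$ and their $\beta$-derivatives must be read as directional limits along each characteristic rather than as traces of $(x,t)$-functions. Once the smoothness of these limiting traces in $\beta$ is granted, as is implicit in the GRP set-up and already used in Proposition \ref{prop-singularity}, the mixed-partial argument passes rigorously to the limit $\alpha\to 0^+$ and the proof is complete.
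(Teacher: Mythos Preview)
Your proof is correct and follows essentially the same route as the paper: pass to characteristic coordinates, use $s_\alpha=-c\,s_x\,t_\alpha$ and $s_\beta=c\,s_x\,t_\beta$, exploit $t_\beta(0,\beta)=0$ together with the relation $t_{\alpha\beta}/t_\alpha=1/(2c)$ from Proposition~\ref{prop-singularity}, and integrate the resulting linear ODE in $\beta$. The only cosmetic difference is that the paper writes the ODE for $s_\alpha(0,\beta)$ and converts back to $s_x$ via \eqref{sbeta2} afterwards, whereas you write the ODE directly for $s_x(0,\beta)$; the two are related by the factor $-c\,\Theta(\beta)$ and yield the same formula.
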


  \begin{proof}

  We rewrite the equation \eqref{entropy-eq} as,
  \begin{align}
  &\dfr{\pt s}{\pt t} + (u+c) \frac{\pt s}{\pt x} = c \frac{\pt s}{\pt x}, \notag\\
  &\dfr{\pt s}{\pt t} + (u-c) \frac{\pt s}{\pt x} =-c \frac{\pt s}{\pt x}. \notag
  \end{align}
In terms of  the characteristic coordinate $(\al,\beta)$, they become
   \begin{align}
  & \dfr{\pt s}{\pt \beta}=\dfr{\pt t}{\pt \beta}\dfr{\pt s}{\pt t}+\dfr{\pt x}{\pt \beta} \dfr{\pt s}{\pt x}=\dfr{\pt t}{\pt \beta}\left[\dfr{\pt s}{\pt t}+(u+c)\dfr{\pt s}{\pt x}\right]=\dfr{\pt t}{\pt \beta}\cdot \left(c \frac{\pt s}{\pt x}\right), \label{sbeta1}\\
  & \dfr{\pt s}{\pt \alpha}=\dfr{\pt t}{\pt \alpha}\dfr{\pt s}{\pt t}+\dfr{\pt x}{\pt \alpha} \dfr{\pt s}{\pt x}=\dfr{\pt t}{\pt \alpha}\left[\dfr{\pt s}{\pt t}+(u-c)\dfr{\pt s}{\pt x}\right]=\dfr{\pt t}{\pt \alpha}\cdot \left(-c \frac{\pt s}{\pt x}\right).\label{sbeta2}
  \end{align}
 In order to derive  \eqref{s-al}, it suffices to measure $\dfr{\pt s}{\pt\al}(0,\beta)$. For this purpose, we differentiate \eqref{sbeta1} with respect to $\al$,
 \begin{equation}
 \dfr{\pt^2 s}{\pt\al\pt \beta} = \dfr{\pt^2t}{\pt\al\pt \beta} \cdot \left(c \frac{\pt s}{\pt x}\right)+ \dfr{\pt t}{\pt \beta} \cdot \dfr{\pt}{\pt \al}\left(c \frac{\pt s}{\pt x}\right).
 \end{equation}
 Inserting \eqref{second-tau} and noting $\frac{\pt t}{\pt \beta}(0,\beta) \ev 0$, we obtain
 \begin{equation}
 \dfr{\pt^2 s}{\pt\al\pt \beta}(0,\beta)= \dfr{1}{2c(0,\beta)}\cdot  \dfr{\pt t}{\pt \al}(0,\beta)\cdot  \left(c \frac{\pt s}{\pt x}\right)(0,\beta).
 \end{equation}
 Substituting \eqref{sbeta2} into this equation yields
 \begin{equation}
  \dfr{\pt}{\pt\beta}\left(\dfr{\pt s}{\pt\al}(0,\beta)\right)=-\dfr{1}{2c(0,\beta)}\dfr{\pt s}{\pt \al}(0,\beta).
 \end{equation}
 This is an ODE for $\frac{\pt s}{\pt \al}(0,\beta)$. We integrate it from $\beta_L$ to $\beta$ to get
 \begin{equation}
 \dfr{\pt s}{\pt \al}(0,\beta)= \dfr{\pt s}{\pt \al}(0,\beta_L)\exp\left[\int_{\beta_L}^\beta- \dfr{1}{2c(0,\xi)}d\xi\right].
 \end{equation}
 We go back to the frame of $(x,t)$-coordinate, by using \eqref{sbeta2}, to obtain
 \begin{equation}
 \Theta(\beta) \left(-c \frac{\pt s}{\pt x}\right)(0,\beta) =\Theta(\beta_L) \left(-c \frac{\pt s}{\pt x}\right)(0,\beta_L)\cdot \exp\left[\int_{\beta_L}^\beta- \dfr{1}{2c(0,\xi)}d\xi\right].
 \end{equation}
 This is just \eqref{s-al}. Specified to the polytropic gases, we obtain \eqref{s-al-poly}.
\end{proof}
 \vspace{0.2cm}

 \begin{cor} \label{cir-s}  The instantaneous change of the entropy along the interface $x=0$ is
 \begin{equation}
\dfr{\pt s}{\pt t}(0,\beta_*) = -u_* s_L' \dfr{c_L}{c_*}\Pi^{-2}(c_*;\beta_*,\beta_L),
 \end{equation}
 if the interface is located inside the intermediate regime.
 \end{cor}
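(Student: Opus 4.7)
The plan is a one-line reduction that combines the entropy transport equation \eqref{entropy-eq} with the formula \eqref{s-al} from the preceding proposition. Since the interface $x=0$ sits inside the intermediate regime by assumption, the tail of the rarefaction wave (the $u-c$ characteristic with coordinate $\beta=\beta_*$) is the curve through the origin separating the fan from the $\bu_*$-state region. Along this tail the flow velocity, taken as a one-sided limit from the rarefaction side, equals $u_*$, and $c(0,\beta_*)=c_*$.

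The first step is to invoke \eqref{entropy-eq}, which is valid throughout the smooth rarefaction fan and extends by continuity to its boundary, to write
\[
\dfr{\pt s}{\pt t}(0,\beta_*) \;=\; -u_*\,\dfr{\pt s}{\pt x}(0,\beta_*).
\]
The second step is to specialize \eqref{s-al} at $\beta=\beta_*$, which gives
\[
\dfr{\pt s}{\pt x}(0,\beta_*) \;=\; s_L'\,\dfr{c_L}{c_*}\,\Pi^{-2}(c_*;\beta_*,\beta_L).
\]
Substituting the second identity into the first yields the claimed formula immediately.

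The only subtle point worth flagging, and the one I would write out most carefully, is the legitimacy of evaluating the rarefaction-side formula \eqref{s-al} at the endpoint $\beta=\beta_*$: strictly speaking, it was derived in the interior of the fan. However, $c$ and $\pt s/\pt x$ are both continuous up to the tail from the rarefaction side, because no jump occurs across a characteristic and the entropy is smoothly transported along particle paths, so the extension by a one-sided limit is harmless. Beyond this brief continuity remark no real obstacle arises; the corollary reads as a direct transcription of the preceding proposition from $\al$-coordinate derivatives back to $(x,t)$-derivatives evaluated at the wave tail.
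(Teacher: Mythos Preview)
Your proposal is correct and is exactly the argument the paper intends: the corollary is stated without proof precisely because it follows by combining the entropy transport equation \eqref{entropy-eq} with the formula \eqref{s-al} evaluated at $\beta=\beta_*$, just as you do. Your remark on the one-sided continuity at the tail is a reasonable clarification but not something the paper dwells on.
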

 \vspace{0.2cm}

 This corollary describes the entropy change along the cell interface and reflects the thermodynamical process.   The rate is independent of the specific form of the equation of state and even has nothing to do with geometric effects  possibly involved.  However, if certain more physical factors  are included in the Gibbs relation \eqref{Gibbs},  the variation of these factors is added into the  initial entropy variation through the internal energy variation, in view of \eqref{en-ini},  and propagates into the intermediate region.  In the next section, we will see how the entropy variation takes effect on numerical fluxes.

\vspace{0.2cm}

\subsection{ The interaction  of kinematics and thermodynamics}

In the design of numerical methods for compressible fluid flows, we often use characteristic quantities or
Riemann invariants.  We introduce
\begin{equation}
\psi =u +\int^\rho \dfr{c(s,\om)}{\om}d\om.
\end{equation}
This quantity is  named as  the ``Riemann invariant" associated with $u-c$ when the flow is isentropic, and describes the inherent relation between
the kinematic quantity $u$ and thermodynamical quantities
\begin{equation}
d\psi =du +\dfr{1}{\rho c}dp +K(\rho, s)ds, \ \ \  K(\rho, s) =-\dfr{1}{\rho c}\dfr{\pt p}{\pt s} +\int^\rho \dfr{1}{\om} \dfr{\pt c(\om,s)}{\pt s} d\om.
\end{equation}
For ideal gases, $K(\rho,s)=T/c$. However, we would like to rename it as a {\em `` kinematic-thermodynamic"} variable for non-isentropic case because it is not invariant over the whole rarefaction wave region (equivalently not invariant along the vector field defined by the eigenvector associated with $u-c$).  In general,  the quantity $\psi$ satisfies the equation of form,
  \begin{align}
  \dfr{\pt \psi}{\pt t} +(u+c) \frac{\pt \psi}{\pt x} = cK(\rho,s) \frac{\pt s}{\pt x}+G(x,t), \label{psi-eq}
  \end{align}
  where $G(x,t)$ results from external forces or geometrical effects.  Here we  consider the case that $G(x,t)\ev 0$ in order to see how the pure entropy variation takes effect.  The analysis is done in the same matter as that for the entropy variation.
  \vspace{0.2cm}

  Since the right hand side of \eqref{psi-eq} is already known from \eqref{s-al}, we denote it by $H(x,t)$ as a given function,
  \begin{equation}
  H(x,t) =cK(\rho, s)\dfr{\pt s}{\pt x}.
  \end{equation}
  We write \eqref{psi-eq} in terms of characteristic coordinates $(\al,\beta)$,
     \begin{align}
  \frac{\pt \psi}{\pt \beta}= \frac{\pt t}{\pt \beta} \cdot  H(x(\al,\beta),t(\al,\beta)).  \label{PsiBeta0}
  \end{align}
  Differentiating with respect to $\al$ yields
  \begin{equation}
  \dfr{\pt^2\psi}{\pt \al\pt\beta} =\dfr{\pt^2t}{\pt \al\pt \beta} \cdot H(\al,\beta) + \dfr{\pt t}{\pt \beta} \dfr{\pt H}{\pt \al}.
  \end{equation}
  Again, we use \eqref{second-tau} and  the fact that $\frac{\pt t}{\pt \beta}(0,\beta)$=0 to obtain
  \begin{equation}
  \dfr{\pt}{\pt\beta}\left[\dfr{\pt \psi}{\pt \al}(0,\beta)\right] = \dfr{1}{2c(0,\beta)}  \Theta(\beta) H(0,\beta),
  \end{equation}
 which provides by   integrating from $\beta_L$ to $\beta$
    \begin{align}
  \frac{\pt \psi}{\pt \alpha}(0,\beta)-\frac{\pt \psi}{\pt \alpha}(0,\beta_L)=\int_{\beta_L}^{\beta} \frac{1}{2c(0,\xi)} \cdot \Theta(\xi) \cdot H(0,\xi) d \xi.
  \label{psi-al-value}
  \end{align}
  Then there remain two issues unanswered: (i) one is about  the initial value $\frac{\pt\psi}{\pt \al}(0,\beta_L)$; (ii)  the other is the variation of $\psi$ in terms of the physical independent variables $(x,t)$.
 \vspace{0.2cm}

 \n (i)  {\em   The initial value $\frac{\pt\psi}{\pt \al}(0,\beta_L)$}. Note that
\begin{equation}
\dfr{\pt\psi}{\pt \al} =\dfr{\pt t}{\pt \al} \cdot \left[\dfr{\pt \psi}{\pt t} + (u-c) \dfr{\pt \psi}{\pt x}   \right],
\label{psi-al-t}
\end{equation}
and plug \eqref{psi-eq} into this identity.
 By setting $\beta=\beta_L$ we have
\begin{equation}
\dfr{\pt \psi}{\pt \al} (0,\beta_L)=\Theta(\beta_L) ( -2c_L\psi_L' +c_LK(\rho_L,s_L)s_L').
\label{psi-ini}
\end{equation}
\vspace{0.2cm}

\n {\em (ii) The return to the $(x,t)$-frame.} The combination of \eqref{psi-al-t} and   \eqref{psi-eq} for $\psi$ gives
\begin{equation}
2c \dfr{\pt \psi}{\pt x}(0,\beta) =c K(\rho,s)\dfr{\pt s}{\pt x}(0,\beta) -\Theta^{-1}(\beta) \dfr{\pt \psi}{\pt \al}(0,\beta).
\label{psi-al-x}
\end{equation}
We collect \eqref{psi-al-value}, \eqref{psi-ini} and \eqref{psi-al-x} to obtain
\begin{equation}
\begin{array}{rl}
2c\dfr{\pt \psi}{\pt x} (0,\beta) =& \d cK(\rho,s)\dfr{\pt s}{\pt x}(0,\beta) -\dfr{\Theta(\beta_L)}{\Theta(\beta)}( -2c_L\psi_L' +c_LK(\rho_L,s_L)s_L')\\[3mm]
&\d - \dfr{\Theta(\beta_L)}{\Theta(\beta)}\int_{\beta_L}^{\beta} \frac{1}{2c(0,\xi)} \cdot \dfr{ \Theta(\xi)}{\Theta(\beta_L)} \cdot H(0,\xi) d \xi.
\end{array}
\end{equation}
We continue using \eqref{psi-eq}  to obtain at $(0,\beta)$
\begin{equation}
\begin{array}{rl}
\dfr{\pt \psi}{\pt t} +u\dfr{\pt \psi}{\pt x} &  = -c\dfr{\pt \psi}{\pt x}+cK(\rho,s)\dfr{\pt s}{\pt x}\\[3mm]
&= \dfr{c}{2} \d K(\rho,s)\dfr{\pt s}{\pt x}(0,\beta) +\dfr{1}{2}\dfr{\Theta(\beta_L)}{\Theta(\beta)}( -2c_L\psi_L' +c_LK(\rho_L,s_L)s_L')\\[3mm]
&\d +\dfr{1}{2} \cdot\dfr{\Theta(\beta_L)}{\Theta(\beta)} \int_{\beta_L}^{\beta} \frac{1}{2c(0,\xi)} \cdot \dfr{ \Theta(\xi)}{\Theta(\beta_L)} \cdot H(0,\xi) d \xi.
\end{array}
\end{equation}
Denote the total (material)  derivative  by $D_0/Dt =\pt/\pt t+u\pt/\pt x$. Then we have
\begin{equation}
\dfr{D_0 u}{D t} +\dfr{1}{\rho c} \dfr{D_0 p}{Dt}  =d_L,
\label{u-p-rare}
\end{equation}
where $d_L$ takes ,
\begin{equation}
\begin{array}{rl}
d_L =&\dfr{c_L}{2} \d K(\rho,s) s_L' \Pi^{-2}(c;\beta,\beta_L) +\dfr{1}{2} \cdot \Pi(c;\beta,\beta_L)  ( -2c_L\psi_L' +c_LK(\rho_L,s_L)s_L')\\[3mm]
&\d +\dfr{1}{2}\Pi(c;\beta,\beta_L)  \int_{\beta_L}^{\beta} \frac{1}{2c(0,\xi)} \cdot \Pi(c;\xi,\beta_L) \cdot H(0,\xi) d \xi.
\end{array}
\label{d-L-1}
\end{equation}
 The integral in  $d_L$ can be either approximated numerically for very general cases (no explicit equation of state), or integrated out explicitly.
 For polytropic gases  $d_L$  is  (by denoting $\theta(\beta)=c(0,\beta)/c_L$),
\begin{equation}
d_L=\left[\dfr{1+\mu^2}{1+2\mu^2}
\left(\theta(\beta)\right)^{1/(2\mu^2)}+\dfr{\mu^2}{1+2\mu^2}
\left(\theta(\beta)\right)^{(1+\mu^2)/\mu^2}\right]T_LS'_L-c_L\left(\theta(\beta)\right)^{1/(2\mu^2)}
\psi'_L.
\label{d-L-2}
\end{equation}
\vspace{0.2cm}

\begin{prop} \label{prop-psi} Consider the curved rarefaction wave associated with $u-c$. The interaction of kinematic-thermodynamic variables can be described as
\begin{equation}
a_L\left( \dfr{D_0 u}{D t}\right)_* +b_L \left( \dfr{D_0 p}{D t}\right)_*=d_L, \ \ (a_L, b_L) =\left(1,\dfr{1}{\rho_*c_*}\right),
\end{equation}
where $d_L$ is given in \eqref{d-L-1} for a generic equation of state or \eqref{d-L-2} for polytropic gases.
\end{prop}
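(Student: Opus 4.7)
The plan is to follow the same characteristic-coordinate strategy that was used for the entropy variation in Proposition 3.3, applied now to the kinematic-thermodynamic variable $\psi$. The starting point is the transport equation \eqref{psi-eq} with $G\equiv 0$, written in the form $\psi_t+(u+c)\psi_x=H(x,t)$ where $H:=cK(\rho,s)s_x$ is already determined by the entropy-rate analysis \eqref{s-al}. In characteristic coordinates $(\al,\beta)$, this becomes $\pt_\beta \psi=(\pt t/\pt\beta)H$, which is the analogue of \eqref{sbeta1} for the entropy.

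I would then differentiate the transformed equation with respect to $\al$, apply the key identity \eqref{second-tau} from the proof of Proposition 3.2 (which rewrites $\pt^2 t/\pt\al\pt\beta$ in terms of first derivatives), and use the cornerstone observation $\pt t/\pt\beta(0,\beta)\equiv 0$ that singles out the singular fan. After these two simplifications only the term proportional to $\Theta(\beta)H(0,\beta)$ survives, producing the ODE
\begin{equation*}
\dfr{\pt}{\pt\beta}\left[\dfr{\pt\psi}{\pt\al}(0,\beta)\right]=\dfr{1}{2c(0,\beta)}\Theta(\beta)H(0,\beta),
\end{equation*}
whose integration from $\beta_L$ to $\beta$ yields \eqref{psi-al-value}.

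The second ingredient is the initial datum $\pt\psi/\pt\al(0,\beta_L)$. I would use the chain-rule identity \eqref{psi-al-t}, substitute the PDE \eqref{psi-eq} for $\psi_t+(u+c)\psi_x$, and evaluate at $\beta=\beta_L$; the material derivative $(u-c)\psi_x-(u+c)\psi_x=-2c\psi_x$ produces the factor $-2c_L\psi_L'$, while the right-hand side of \eqref{psi-eq} contributes $c_LK(\rho_L,s_L)s_L'$, giving \eqref{psi-ini}. To return to the physical frame, the same pair of identities \eqref{psi-al-t} and \eqref{psi-eq} now read as a linear system for $\psi_t$ and $\psi_x$ whose subtraction gives \eqref{psi-al-x}, expressing $2c\psi_x(0,\beta)$ as $cK(\rho,s)s_x(0,\beta)-\Theta^{-1}(\beta)\pt_\al\psi(0,\beta)$.

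Finally, inserting the formula for $\pt_\al\psi(0,\beta)$ obtained by combining \eqref{psi-al-value} and \eqref{psi-ini}, and using $\Theta(\beta)/\Theta(\beta_L)=\Pi(c;\beta,\beta_L)$ from Proposition 3.2, produces the claimed $d_L$ in \eqref{d-L-1}. Reassembling $\psi_t+u\psi_x=(\psi_t+(u+c)\psi_x)-c\psi_x$ and recognising $\psi_t+u\psi_x=D_0u/Dt+(\rho c)^{-1}D_0p/Dt$ modulo the entropy contribution (which cancels against $cK(\rho,s)s_x$) yields \eqref{u-p-rare}, which is precisely the stated identity with $(a_L,b_L)=(1,1/(\rho_*c_*))$. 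The polytropic formula \eqref{d-L-2} then follows by substituting $K=T/c$ and $c(0,\beta)/c_L=\theta(\beta)$, using Proposition 3.3 to replace $s_x(0,\beta)$ by $s_L'$ times the appropriate power of $\theta(\beta)$, and carrying out the resulting elementary integral in $\xi$. The main technical obstacle is a careful bookkeeping step: verifying that the $cK(\rho,s)s_x(0,\beta)$ term generated by \eqref{psi-al-x} exactly cancels the corresponding term in $\psi_t+(u+c)\psi_x$ so that the material derivative of the Riemann invariant is driven only by the transported initial gradients $\psi_L'$ and $s_L'$ weighted by the geometric factors $\Pi$.
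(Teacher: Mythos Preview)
Your proposal is correct and follows essentially the same route as the paper: rewrite \eqref{psi-eq} in characteristic coordinates, differentiate in $\al$, invoke \eqref{second-tau} together with $\pt t/\pt\beta(0,\beta)\equiv 0$ to obtain the ODE for $\pt_\al\psi(0,\beta)$, integrate, supply the initial value via \eqref{psi-al-t}, and then return to the $(x,t)$ frame through \eqref{psi-al-x}. One small clarification on your final bookkeeping remark: the passage from $D_0\psi/Dt$ to $D_0u/Dt+(\rho c)^{-1}D_0p/Dt$ uses simply $D_0s/Dt=0$ from \eqref{entropy-eq}, not a cancellation against $cK s_x$; the term $(c/2)K(\rho,s)s_x(0,\beta)$ does \emph{not} cancel but survives as the first summand of $d_L$ in \eqref{d-L-1}, after being rewritten in terms of $s_L'$ via \eqref{s-al}.
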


\section{Resolution of shocks}

Let $x=x(t)$ be a shock with speed $\sg=x'(t)$ and separate two states $\bu(x,t)$ in the wave front and $\bar\bu(x,t)$ in the wave back. This shock is defined by the Rankine-Hugoniot relations,
\begin{equation}
\begin{array}{l}
\sg=\dfr{\rho u-\bar\rho \b u}{\rho-\bar\rho},\\[3mm]
(\rho u-\bar\rho\bar u)^2 =(\rho-\bar\rho)(\rho u^2 +p-\bar\rho\bar u^2 -\bar p),\\[3mm]
   e(\rho, p) -e(\bar \rho,\bar p) + (\tau-\bar \tau)\cdot \dfr{p+\bar p}2=0.
\end{array}
\label{R-H}
\end{equation}
The second identity, the kinematic-thermodynamic relation, can be written as
\begin{equation}
(u-\bar u)^2  =\dfr{1}{\rho \bar \rho} (\rho-\bar \rho)(p-\bar p).
\label{R-H-2}
\end{equation}
 The last one is named the {\em Hugoniot relation}, defining the jump of thermodynamical quantities alone.  With the condition that
\begin{equation}
\dfr{\pt e}{\pt \rho } +\dfr{\tau-\bar \tau}2>0,  \mbox{ or } \dfr{\pt e}{\pt p}-\dfr{p+\bar p}{2\rho^2} >0,
\end{equation}
we express $\rho$ in terms of $p$,
\begin{equation}
\rho =H(p;\bar p, \bar \rho).
\end{equation}
 We substitute this into \eqref{R-H-2} and obtain
\begin{equation}
u=\bar u \pm \Phi(p; \bar \rho, \bar p),  \ \ \ \Phi(p;\bar p,\bar \rho):=\sqrt{\dfr{1}{\rho\bar \rho} (\rho-\bar \rho)(p-\bar p)}.
\end{equation}
Therefore the Rankine-Hugoniot relations comprise of a kinematic-thermodynamic relation, a (pure thermodynamic) Hugoniot relation and an identification of the  propagation speed,
\begin{equation}
\begin{array}{l}
\sg=\dfr{\rho u-\bar \rho\bar u}{\rho-\bar \rho},\\[3mm]
u=\bar u \pm \Phi(p; \bar \rho, \bar p),\\[3mm]
\rho- H(p;\bar p,\bar  \rho) =0.
\end{array}
\label{RH-u}
\end{equation}
The signs ``$\pm$" correspond to $u\pm c$",  respectively.  All the details can be found in \cite{Courant}. \vspace{0.2cm}

As a key part of the GRP solver,  we need to track the singularity. Inherently, we make differentiation along the shock trajectory $x=x(t)$. Denote
\begin{equation}
\dfr{D_\sg}{Dt }  = \dfr{\pt }{\pt t} +\sg\dfr{\pt }{\pt x}.
\end{equation}
We specify to the shock associated with $u+c$ and take the plus sign in \eqref{RH-u}.
Then we have
\begin{equation}
\dfr{D_\sg u}{Dt} = \dfr{D_\sg \bar u}{Dt}  + \dfr{\pt \Phi}{\pt p} \dfr{D_\sg p}{Dt} +  \dfr{\pt \Phi}{\pt \bar \rho} \dfr{D_\sg \bar \rho}{Dt} + \dfr{\pt \Phi}{\pt \bar p} \dfr{D_\sg \bar p}{Dt}.
\end{equation}
 Note that in smooth regions there hold
 \begin{equation}
\dfr{D_0\rho}{Dt} +\rho \dfr{\pt u}{\pt x} =0, \ \ \  \rho \dfr{D_0 u}{Dt} +\dfr{\pt p}{\pt x}=0, \ \ \ \ \dfr{D_0 p}{Dt} +\rho c^2 \dfr{\pt u}{\pt x}=0.
 \end{equation}
 The Lax-Wendroff methodology is adopted to replace the spatial derivatives of solutions in the wave front by the corresponding temporal derivatives, $\frac{\pt \bu}{\pt x} \rw \frac{\pt \bu}{\pt t}$; and replace the temporal derivatives in the wave back by the corresponding spatial derivatives $\frac{\pt \bar \bu}{\pt t}\rw\frac{\pt \bar \bu}{ \pt x}$.  Taking the limit along the shock trajectory $x=x(t)$ with
 \begin{equation}
 \begin{array}{ll}
\d  \lim_{t\rw 0+} \bar \bu(x(t)+0,t) =\bu_R,  \ \ &\d \lim_{t\rw 0+} \dfr{\pt \bar \bu}{\pt x}(x(t)+0,t)=\bu_R',\\[3mm]
\d   \lim_{t\rw 0+} \bar \bu(x(t)-0,t) =\bu_*,  \ \ &\d \lim_{t\rw 0+} \dfr{D_0\bar \bu}{Dt }(x(t)-0,t)=\left(\dfr{D_0\bu}{Dt}\right)_*,
 \end{array}
 \end{equation}
 we can express the temporal variation of solution in the intermediate region in terms of the local Riemann solution and the spatial variation of initial data from the right, as stated in the following proposition.
 \vspace{0.2cm}

\begin{prop}\label{prop-shock} Consider the shock associated with the characteristic field $u+c$. The temporal variation of solution in the intermediate region is described as
\begin{equation}
a_R\left(\dfr{D_0 u}{Dt}\right)_* +b_R\left(\dfr{D_0 p}{Dt}\right)_* =d_R,
\end{equation}
where the coefficients $a_R$, $b_R$ and $d_R$ are given in terms of the intermediate state $\bu_*$ and the initial data from the right,
\begin{equation}
\begin{array}{l}
a_R=1+\rho_{2*} \cdot (\sg-u_*)\cdot \dfr{\pt \Phi}{\pt p}(p_*;p_R,\rho_R),\\[3mm]
b_R=-\left[\dfr{1}{\rho_{2*}\cdot c_{2*}^2}\cdot (\sg-u_*)+\dfr{\pt \Phi}{\pt p}(p_*;p_R,\rho_R)\right],\\[3mm]
d_R=L_p^R \cdot  p_R'+L_u^R \cdot u_R'+L_{\rho}^R \cdot \rho_R', \\\\
\end{array}
\label{right-coef}
\end{equation}
and
\begin{equation}
\begin{array}{l}
 L_p^R= -\dfr{1}{\rho_R}+(\sg-u_R)\cdot \dfr{\pt \Phi}{\pt
 \bar  p}(p_*;p_R,\rho_R),\\[3mm]
 L_u^R=
\sg-u_R -\rho_R\cdot c_R^2 \cdot \dfr{\pt \Phi}{\pt  \bar p}(p_*;p_R,\rho_R)-\rho_R \cdot \dfr{\pt
\Phi}{\pt \bar \rho}(p_*;p_R,\rho_R),\\[3mm]
L_{\rho}^R= (\sg-u_R)\cdot \dfr{\pt
\Phi}{\pt \bar \rho}(p_*;p_R,\rho_R).
\end{array}
\end{equation}
\end{prop}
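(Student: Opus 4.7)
My plan is to differentiate the kinematic--thermodynamic Rankine--Hugoniot relation $u = \bar u + \Phi(p;\bar p,\bar\rho)$ along the shock trajectory $x=x(t)$ and then convert the resulting identity into the asserted linear relation using the Lax--Wendroff idea on each side of the shock. The key algebraic identity to exploit throughout is
$$\frac{D_\sigma}{Dt} = \frac{D_0}{Dt} + (\sigma - v)\frac{\partial}{\partial x},$$
where $v = u$ when one moves with the wave-back (intermediate) material and $v = \bar u$ when one moves with the wave-front (right) material.

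First I would differentiate $u-\bar u - \Phi(p;\bar p,\bar\rho)=0$ along $x=x(t)$ to produce
$$\frac{D_\sigma u}{Dt} - \frac{D_\sigma\bar u}{Dt} - \Phi_p\frac{D_\sigma p}{Dt} - \Phi_{\bar p}\frac{D_\sigma\bar p}{Dt} - \Phi_{\bar\rho}\frac{D_\sigma\bar\rho}{Dt} = 0,$$
separate this into a wave-back contribution $(D_\sigma u - \Phi_p D_\sigma p)/Dt$ and a wave-front contribution $(D_\sigma\bar u + \Phi_{\bar p}D_\sigma\bar p + \Phi_{\bar\rho}D_\sigma\bar\rho)/Dt$, and work on each side independently.

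On the wave-back side the solution is smooth, so I convert the spatial derivatives appearing through $D_\sigma = D_0 + (\sigma-u)\partial_x$ into material derivatives using the Euler relations $p_x = -\rho D_0 u/Dt$ and $u_x = -(D_0 p/Dt)/(\rho c^2)$. Passing to the limit $t\to 0+$ sends every wave-back quantity to $\bu_*$ (with $\rho_{2*},c_{2*}$ as in the statement), and collecting $(D_0 u/Dt)_*$ and $(D_0 p/Dt)_*$ yields exactly the coefficients $a_R = 1 + \rho_{2*}(\sigma-u_*)\Phi_p$ and $b_R = -[(\sigma-u_*)/(\rho_{2*}c_{2*}^2)+\Phi_p]$ of the statement. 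Symmetrically, on the wave-front side I go the opposite direction: the wave-front state is specified by initial data, so I eliminate the material derivatives $D_0\bar u/Dt$, $D_0\bar p/Dt$, $D_0\bar\rho/Dt$ in favour of the spatial derivatives $\bar u_x$, $\bar p_x$, $\bar\rho_x$ using the same Euler relations, after which $D_\sigma$ produces only $\bar u_x,\bar p_x,\bar\rho_x$. Taking $t\to 0+$ replaces these with $u_R',p_R',\rho_R'$ and the state by $\bu_R$. Regrouping the coefficients of $p_R'$, $u_R'$, $\rho_R'$ reads off $L_p^R$, $L_u^R$, $L_\rho^R$ exactly as given, and moving this side to the right of the equation produces $d_R$.

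The only real obstacle is bookkeeping: keeping straight which factor of $(\sigma - u_*)$ versus $(\sigma - u_R)$ arises in each instance of the $D_\sigma$-to-$D_0$ conversion, applying the Lax--Wendroff substitutions in opposite directions on the two sides (\emph{temporal $\to$ spatial} on the front, \emph{spatial $\to$ temporal} on the back), and not confusing the partials $\Phi_p$, $\Phi_{\bar p}$, $\Phi_{\bar\rho}$ of $\Phi(p;\bar p,\bar\rho)$. Once these substitutions are performed consistently the remaining manipulations are purely algebraic linear combinations, and the proposition follows without any appeal to a specific equation of state, just as Proposition~\ref{prop-psi} did on the rarefaction side.
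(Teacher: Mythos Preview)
Your proposal is correct and follows essentially the same approach as the paper: differentiate the kinematic--thermodynamic Rankine--Hugoniot relation along the shock, use the smooth Euler equations to trade between $D_0/Dt$ and $\partial/\partial x$ in opposite directions on the two sides (the ``Lax--Wendroff methodology''), and take the limit $t\to 0+$. Your write-up is in fact more explicit than the paper's sketch about the identity $D_\sigma/Dt = D_0/Dt + (\sigma-v)\,\partial/\partial x$ and about which conversion goes on which side, but the argument is the same.
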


Once $(D_0 u/D t)_*$ and $(D_0 p/D t)_*$ are available, we can  derive the temporal variation of density in the intermediate region $(\pt \rho/\pt t)_*$ by tracking the shock trajectory $x=x(t)$.

\begin{prop}\label{prop-shock-density}  If the intermediate state is located between a contact discontinuity and the right-moving  shock associated with $u+c$, then we have

\begin{equation}
g_{\rho}^R \left(\dfr{\pt \rho}{\pt t}\right)_* +g_p^R
\left(\dfr{Dp}{Dt}\right)_*+g_u^R\left(\dfr{Du}{Dt}\right)_* = u_*
\cdot h_R, \label{rho-deri-shock}
\end{equation}
where  $g_{\rho}^R$, $g_p^R$, $g_u^R$ and $f_R$ are constant,
depending on the initial data (\ref{data}) in the right hand
side and the Riemann solution $R^A(0;\bu_L,\bu_R)$. They are expressed
in the following,
\begin{equation}
\begin{array}{l}
g_{\rho}^R = u_*-\sg,   \ \  g_p^R= \dfr{\sg}{c_{2*}^2
}-u_*H_1,  \ \ \ g_u^R=u_*\cdot  \rho_{2*}(\sg-u_*)\cdot H_1,\\
h_R= (\sg-u_R)\cdot H_2 \cdot p_R'+(\sg-u_R)\cdot H_3 \cdot\rho_R'
-\rho_R \cdot \left(H_2\cdot c_R^2+H_3\right)\cdot u_R'.
\end{array}
\end{equation}
 and  $H_i$, $i=1,2,3$, are
\begin{equation}
H_1=\dfr{\pt H}{\pt p}(p_*; p_R,\rho_R ), \ \ \ H_2 =\dfr{\pt
H}{\pt \b p}(p_*; p_R,\rho_R ), \ \ \ H_3 =\dfr{\pt H}{\pt \b
\rho}(p_*; p_R,\rho_R). \label{def-H}
\end{equation}
 \end{prop}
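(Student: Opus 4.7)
The plan is to imitate the Rankine--Hugoniot differentiation strategy already used in Proposition \ref{prop-shock}, but now applied to the purely thermodynamic Hugoniot branch $\rho - H(p;\bar p,\bar\rho) = 0$ of \eqref{RH-u} rather than to the kinematic branch. Differentiating along the shock trajectory with the operator $D_\sg/Dt$ gives
\[
\dfr{D_\sg \rho}{Dt} \;=\; H_1\,\dfr{D_\sg p}{Dt} + H_2\,\dfr{D_\sg \bar p}{Dt} + H_3\,\dfr{D_\sg \bar\rho}{Dt},
\]
with $H_1,H_2,H_3$ as in \eqref{def-H}. The task then splits naturally into (a) processing the two wave-back derivatives at the intermediate state $\bu_{2*}$ so as to express them in terms of $(\pt\rho/\pt t)_*$, $(D_0 p/Dt)_*$, $(D_0 u/Dt)_*$, and (b) processing the two wave-front derivatives at the right initial state via the same Lax--Wendroff step used in Proposition \ref{prop-shock}.

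On the wave-back side I use the operator decomposition $D_\sg = D_0 + (\sg - u_*)\pt/\pt x$. The momentum equation $\pt p/\pt x = -\rho_{2*}(D_0 u/Dt)_*$ immediately rewrites $D_\sg p/Dt$ as $D_0 p/Dt - \rho_{2*}(\sg - u_*)(D_0 u/Dt)$. To isolate $\pt\rho/\pt t$, I eliminate $\pt\rho/\pt x$ between the two identities $D_\sg\rho/Dt = \pt\rho/\pt t + \sg\,\pt\rho/\pt x$ and $D_0\rho/Dt = \pt\rho/\pt t + u_*\pt\rho/\pt x$, obtaining
\[
(u_* - \sg)\,\dfr{\pt\rho}{\pt t}\bigg|_* \;=\; u_*\,\dfr{D_\sg\rho}{Dt}\bigg|_* - \sg\,\dfr{D_0\rho}{Dt}\bigg|_*.
\]
Because the region between the contact discontinuity and the right-moving shock is smooth and each particle there preserves its entropy, the isentropic identity $D_0 p/Dt = c_{2*}^2\,D_0\rho/Dt$ holds at $\bu_{2*}$ and lets me exchange $D_0\rho/Dt$ for $(1/c_{2*}^2)(D_0 p/Dt)_*$. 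Plugging these substitutions into the differentiated Hugoniot relation above reproduces the left-hand side of \eqref{rho-deri-shock} with exactly the coefficients $g_\rho^R = u_*-\sg$, $g_p^R = \sg/c_{2*}^2 - u_*H_1$, and $g_u^R = u_*\rho_{2*}(\sg - u_*)H_1$.

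For the wave-front derivatives the standard Lax--Wendroff step applies: on $x>x(t)$ the solution is smooth and matches the piecewise linear initial data, so the continuity and pressure equations evaluated at $t\to 0+$ give $\pt\bar\rho/\pt t|_R = -u_R\rho_R' - \rho_R u_R'$ and $\pt\bar p/\pt t|_R = -u_R p_R' - \rho_R c_R^2 u_R'$; adding $\sg\,\pt/\pt x$ then produces $D_\sg\bar p/Dt|_R = (\sg-u_R)p_R' - \rho_R c_R^2 u_R'$ and $D_\sg\bar\rho/Dt|_R = (\sg - u_R)\rho_R' - \rho_R u_R'$. Multiplying these by $u_* H_2$ and $u_* H_3$ respectively and collecting the coefficients of $p_R'$, $u_R'$, $\rho_R'$ yields exactly $u_* h_R$ with the three prefactors stated in the proposition.

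The main obstacle is simply book-keeping rather than any conceptual step: one must keep straight that the wave-back state at the right-moving shock is $\bu_{2*}$ (not the contact-separated state $\bu_{1*}$), and the isentropic identity $D_0 p = c_{2*}^2 D_0\rho$ may be invoked only on this side, where the flow is genuinely smooth. Once those conventions are pinned down, the identification of all four coefficients is a purely algebraic collection of terms, with no new analytical input beyond what is already present in Propositions \ref{prop-singularity}--\ref{prop-shock}.
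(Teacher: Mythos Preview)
Your proposal is correct and follows exactly the route the paper indicates: differentiate the Hugoniot branch $\rho = H(p;\bar p,\bar\rho)$ of \eqref{RH-u} along the shock with $D_\sg/Dt$, convert the wave-back derivatives at $\bu_{2*}$ via $D_\sg = D_0 + (\sg-u_*)\pt_x$ together with the momentum equation and the isentropic relation $D_0 p = c_{2*}^2 D_0\rho$, and convert the wave-front derivatives at $\bu_R$ by the Lax--Wendroff replacement. The paper itself merely says that $(\pt\rho/\pt t)_*$ is obtained ``by tracking the shock trajectory $x=x(t)$'' and gives no further detail, so your write-up is in fact a faithful expansion of the omitted computation; the algebraic elimination $(u_*-\sg)\,\pt_t\rho = u_*\,D_\sg\rho/Dt - \sg\,D_0\rho/Dt$ that you use to isolate $\pt_t\rho$ is precisely what produces the stated coefficients.
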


\vspace{0.2cm}

We emphasize that \eqref{RH-u} is independent of the specific equation of state.  Specified to the polytropic cases, we refer to \cite{Ben2006} for details.

\section{The GRP solver }\label{sec-thermo}

The GRP solver has two main versions: Acoustic version and nonlinear version.  The acoustic version applies to the case that  the waves  from each cell boundary are weak and so to the regions in which the flow is    smooth.  Of course, if one would like to use a simplified version, it is a choice. The second order ADER method is the acoustic version of GRP solver. If the waves from a cell boundary is very strong, the nonlinear GRP has to be used because the thermodynamic effect becomes significant.  We just provide the GRP solver when the cell interface $x=0$ is located inside the intermediate region or  the sonic case.  If all waves from the singularity point $(0,0)$ move to one side of the cell boundary $x=0$, $\bu_*$ and $(\pt \bu/\pt t)_*$ are just taken upwind.
\vspace{0.2cm}

\subsection{Acoustic GRP} As $\|\bu_L-\bu_R\|\ll 1$, the acoustic GRP solver is adopted.  Then $\bu_* = \bu_L=\bu_R$, and
$(\pt \bu/\pt t)_*$ is obtained by solving the linearized  system
\begin{equation}
\dfr{\pt u}{\pt t} +\bA(\bu_*) \dfr{\pt \bu}{\pt x} =G(x,\bu_*), \ \ \ \bA(\bu_*) =\dfr{\pt \bbf(\bu_*)}{\pt \bu}.
\end{equation}
Specified to the compressible Euler equations, we have the proposition.

 \begin{prop} [Acoustic case]  \label{thm-acoustic} As $\bu_L=\bu_*=\bu_R$
and $\bu_L'\neq \bu_R'$, we have the acoustic case. If $u_*-c_*<0$ and
$u_*+c_*>0$, then $(\pt u/\pt t)_*$ and $(\pt p/\pt t)_*$ can be
solved as
\begin{equation}
\begin{array}{c}
\left(\dfr{\pt u}{\pt t}\right)_*=-\dfr 12
\left[(u_*+c_*)\left(u_L'+\dfr{p_L'}{\rho_* c_*}\right )
+(u_*-c_*) \left(u_R'-\dfr{p_R'}{\rho_*
c_*} \right)\right],\\\\
 \left(\dfr{\pt p}{\pt t}\right)_*=-\dfr {\rho_*c_*}2
\left[(u_*+c_*)\left(u_L'+\dfr{p_L'}{\rho_* c_*}\right )
-(u_*-c_*) \left(u_R'-\dfr{p_R'}{\rho_*
c_*} \right)\right].\\
\end{array}
\label{acoustic1}
\end{equation}
Then the quantity $(\pt \rho/\pt t)_*$ is calculated from the
equation of state $p=p(\rho, s)$,
\begin{equation}
\left(\dfr{\pt\rho}{\pt t}\right)_*=\left\{ \begin{array}{ll}
 \dfr{1}{c_*^2} \left[
\left(\dfr{\pt p}{\pt t}\right)_* +u_* \left( p_L'-c_*^2\rho_L'
\right) \right]\ \ \ & \mbox{ if } u_*=u_L=u_R>0,\\\\
 \dfr{1}{c_*^2} \left[
\left(\dfr{\pt p}{\pt t}\right)_* +u_* \left( p_R'-c_*^2\rho_R'
\right) \right], & \mbox{ if }u_*=u_L=u_R<0.
\end{array}
\right. \label{acoustic2}
\end{equation}
\end{prop}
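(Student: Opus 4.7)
The plan is to exploit the fact that the associated Riemann data $\bu_L=\bu_R=\bu_*$ is constant, so the Riemann solution is the trivial one, no genuinely nonlinear waves are produced, and the only variation in $\bu(x,t)$ near $(0,0)$ comes from the piecewise-linear perturbation $\bu_L'$, $\bu_R'$ of the initial data. Consequently, the leading-order behavior of $\bu(0,t)$ as $t\to 0^+$ is governed by the Euler system linearized about the constant state $\bu_*$, and it suffices to track the linear transport of initial perturbations along the characteristics emanating from $(0,0)$.

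First, I introduce the acoustic Riemann invariants of the linearized system, $R^{\pm}=u\pm p/(\rho_*c_*)$, and the entropy $s$. Along the frozen characteristics $dx/dt=u_*\pm c_*$ and $dx/dt=u_*$ these invariants are transported to leading order, i.e.\ $\partial_t R^{\pm}+(u_*\pm c_*)\partial_x R^{\pm}=0$ and $\partial_t s+u_*\partial_x s=0$. Under the sonic hypothesis $u_*-c_*<0<u_*+c_*$, the $R^+$-characteristic through $(0,t)$ traces back into the left sector and the $R^-$-characteristic traces back into the right sector; evaluating the initial data gives
\begin{equation*}
\left(\dfr{\pt R^+}{\pt t}\right)_*=-(u_*+c_*)\left(u_L'+\dfr{p_L'}{\rho_*c_*}\right),\quad
\left(\dfr{\pt R^-}{\pt t}\right)_*=-(u_*-c_*)\left(u_R'-\dfr{p_R'}{\rho_*c_*}\right).
\end{equation*}
Taking the half-sum and half-difference (scaled by $\rho_*c_*$) immediately yields the two formulas of \eqref{acoustic1}.

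For $(\pt\rho/\pt t)_*$, I use that entropy is materially conserved and that $p=p(\rho,s)$, so along the particle path $dx/dt=u_*$ one has $Dp/Dt=c_*^2\, D\rho/Dt$ at $(0,0^+)$, i.e.
\begin{equation*}
\left(\dfr{\pt p}{\pt t}\right)_*+u_*\left(\dfr{\pt p}{\pt x}\right)_*=c_*^2\left[\left(\dfr{\pt\rho}{\pt t}\right)_*+u_*\left(\dfr{\pt\rho}{\pt x}\right)_*\right].
\end{equation*}
The contact characteristic is upwinded according to the sign of $u_*$: if $u_*>0$, I substitute $(\pt p/\pt x)_*=p_L'$ and $(\pt\rho/\pt x)_*=\rho_L'$; if $u_*<0$, I substitute the corresponding right values. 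Solving for $(\pt\rho/\pt t)_*$ in each case reproduces \eqref{acoustic2}.

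The only subtle point — and the step I would spend care on — is justifying that the linearization really captures the full leading-order behavior: since $\bu_L=\bu_R$, the Riemann fan degenerates to the single state $\bu_*$, every rarefaction/shock strength coefficient in the nonlinear analysis of Sections~3--4 vanishes, and the only surviving terms are those proportional to $\bu_L'$ and $\bu_R'$, which are precisely the linear acoustic contributions. Equivalently, one may obtain the same formulas as the degenerate limit of Propositions~\ref{prop-psi} and \ref{prop-shock} as $\bu_R-\bu_L\to 0$, which provides an internal consistency check with the nonlinear GRP solver.
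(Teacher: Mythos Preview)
Your proposal is correct and follows the same approach as the paper: the paper's argument is simply the sentence preceding the proposition, namely that one solves the linearized system $\bu_t+\bA(\bu_*)\bu_x=0$ about the constant state $\bu_*$, and then states the result without further detail. Your characteristic decomposition via the acoustic invariants $R^\pm=u\pm p/(\rho_*c_*)$ and the entropy is precisely the standard way to carry this out, and your upwinding of the three characteristics according to the signs of $u_*\pm c_*$ and $u_*$ is exactly what the paper intends; you have merely supplied the routine details that the paper omits.
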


 \vspace{0.2cm}

 \subsection{Nonlinear GRP}  As $\|\bu_L-\bu_R\|\gg 1$, the nonlinear GRP solver has to be applied.  The numerical example in the present paper shows the importance of the nonlinear GRP solver.   The nonlinear GRP solver for compressible fluid flows comprises of two parts:

 \begin{enumerate}
 \item[(i)] {\em Kinematic-thermodynamic relation. }  The material derivatives of $u$ and $p$ are obtained by solving the linear algebraic system
 \begin{equation}
 \begin{array}{l}
 a_L\left( \dfr{D_0u}{Dt}\right) _*+ b_L \left( \dfr{D_0p}{Dt}\right) _*=d_L,\\[3mm]
 a_R\left( \dfr{D_0u}{Dt}\right) _*+ b_R \left( \dfr{D_0p}{Dt}\right) _*=d_R,
 \end{array}
 \label{GRP-nonlinear}
 \end{equation}
 where $(a_L,b_L,d_L)$ and $(a_R, b_R, d_R)$ are given in terms of local Riemann solution $R^A(x/t;\bu_L, \bu_R)$ and the corresponding initial data.
 \vspace{0.2cm}

 \item[(ii)] {\em} {The temporal  variation of pure thermodynamical quantities.} Once $(D_0 u/Dt)_*$ and $(D_0 p/Dt)_*$ are available, we can determine other thermodynamic quantities such as the density.  If $x=0$ is located between the rarefaction wave and the contact discontinuity, the Gibbs relation or the equation of state, $p=p(\rho,s)$, is used directly,
 \begin{equation}
 dp =c^2 d\rho +\dfr{\pt p}{\pt s} ds.
 \end{equation}
 Otherwise, if $x=0$ is located between the shock and the contact discontinuity, \eqref{rho-deri-shock} is used  to obtain $(\pt \rho/\pt t)_*$.
 \vspace{0.2cm}

 \end{enumerate}

 From the  derivation of \eqref{GRP-nonlinear}, the thermodynamics  is included in the coefficients $d_L$ and $d_R$ to
 exert on $(D_0 u/Dt)_*$ and $(D_0 p/Dt)_*$ and then on $(\pt \rho/\pt t)_*$. In the end the thermodynamic effect is exhibited in the numerical fluxes to influence numerical solutions.

\section{ Numerical demonstration of thermodynamical effect}\label{sec-num}

In order to show the importance of thermodynamic effects, we choose the example from \cite{Tang-Liu}, which is equivalent to the Leblanc problem. The initial data is of Riemann-type,
\begin{equation}
(\rho, u,p)(x,0) = \left\{
\begin{array}{ll}
(10^4, 0, 10^4), \ \ \ &x<0,\\[3mm]
(1,0,1), &x>0.
\end{array}
\right.
\end{equation}
The polytropic index $\gm=1.4$.
The solution consists of a very strong left-going rarefaction wave, a relatively weak right-moving  shock, and a contact discontinuity in the middle, besides constant states. The rarefaction wave spans the density from $10^4$ to
$80$, while the density just jumps from around $1$ to $8$ across the shock. If the rarefaction wave is displayed with 100 grid points, the jump of density values at neighboring grid points is about $100$.  Hence,
the difference of density (or pressure) values at neighboring grid points inside the rarefaction wave is much larger than that near the shock. Hence  any possible poor resolution of the rarefaction wave may cause the numerical inaccurate  capturing of the shock (including the wave speed and the strength).  The following numerical demonstration is carried out with CFL number $0.5$ for first order schemes and $0.32$ for second order schemes.  All figures are displayed with $66$ points.
\vspace{0.2cm}

We start the numerical demonstration with first order methods, as shown in Figures \ref{Fig-first-order}.  The first order solvers we choose are the popularly-used exact Riemann solver (Godunov scheme), HLLC solver and Roe solver with entropy fix.  We carry out the computations using different grid points: $200$ points, $1000$ points and $10^4$ grid points, respectively.  It is observed that there are large disparities of numerical solutions from the exact one even with $10^4$ grid points. The reason, we think,  is the following. Almost all  first order schemes  assume that the flow is uniform at every time step inside each computational cell, which results in the fact that the associated rarefaction wave is isentropic. The uniformity of entropy leads to the fact that the entropy variation is difficult to be included in numerical fluxes unless it can be made up through the jumps from cell boundaries.
\vspace{0.2cm}

\begin{figure}[!htb]
\subfigure[$200$ grid points ]{
  \includegraphics[width=2.5in]{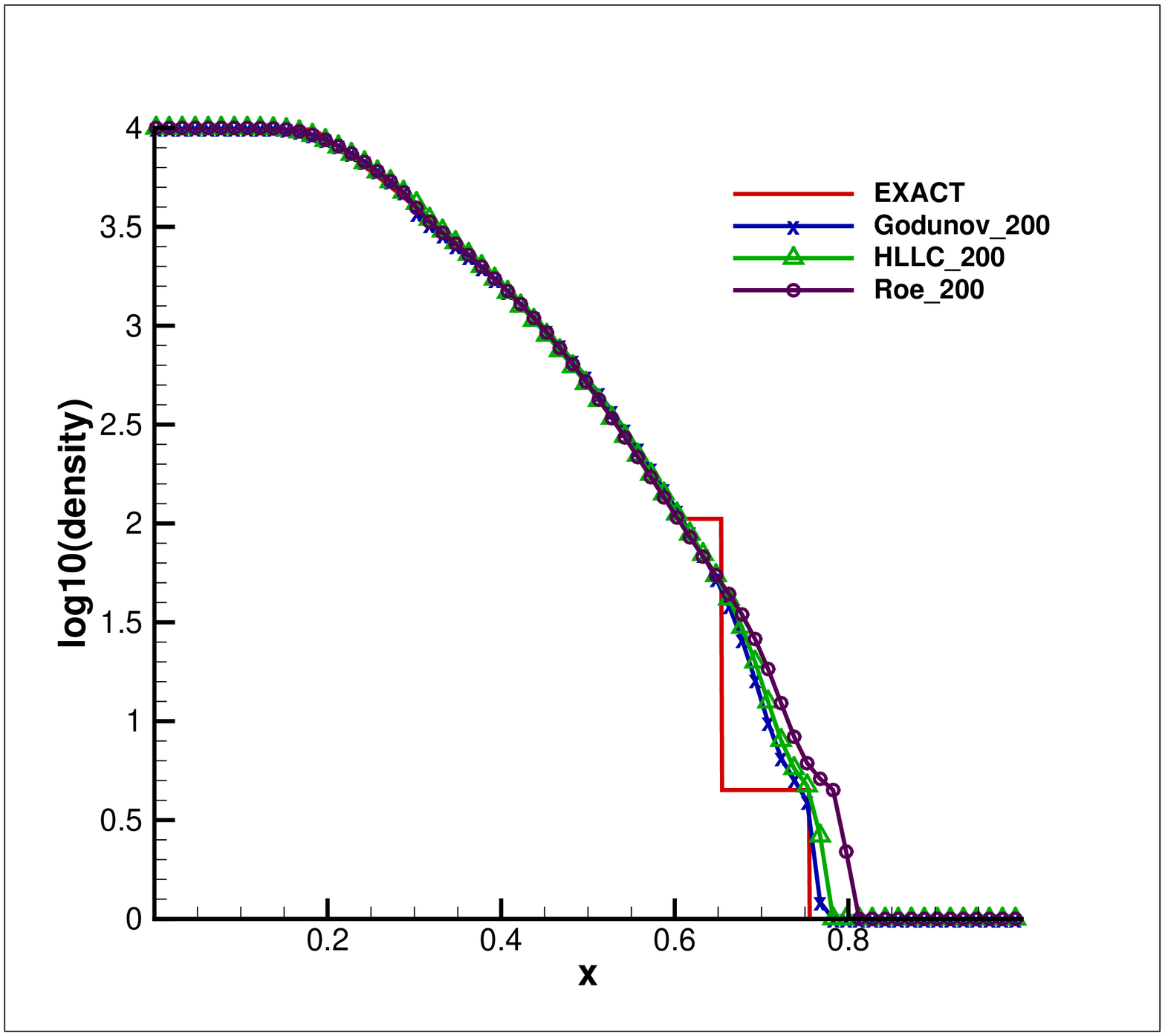}}
\hspace{1cm}
\subfigure[$1000$ grid points]{  \includegraphics[width=2.5in]{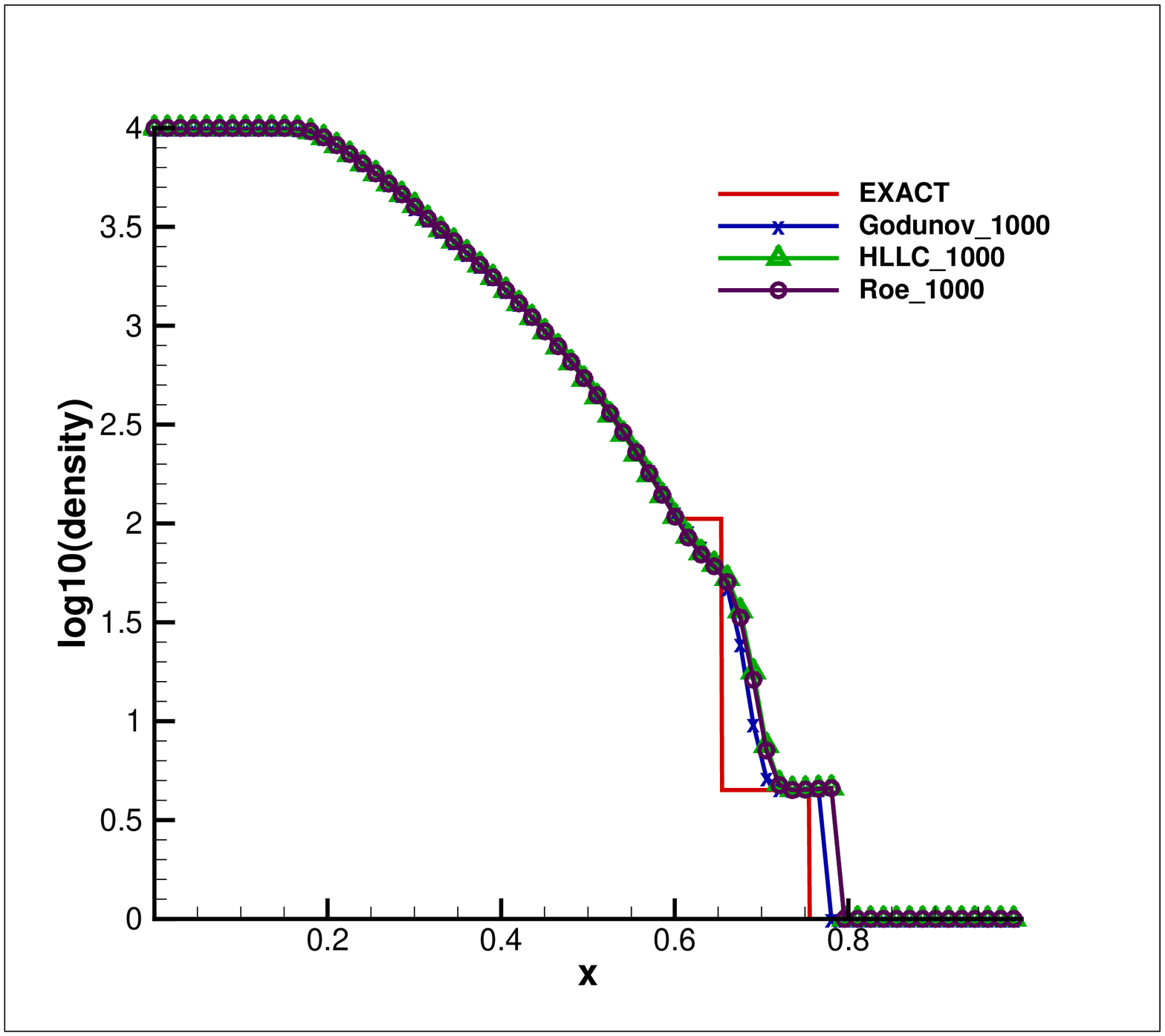} }
\subfigure[$10000$ grid points]{ \includegraphics[width=2.5in]{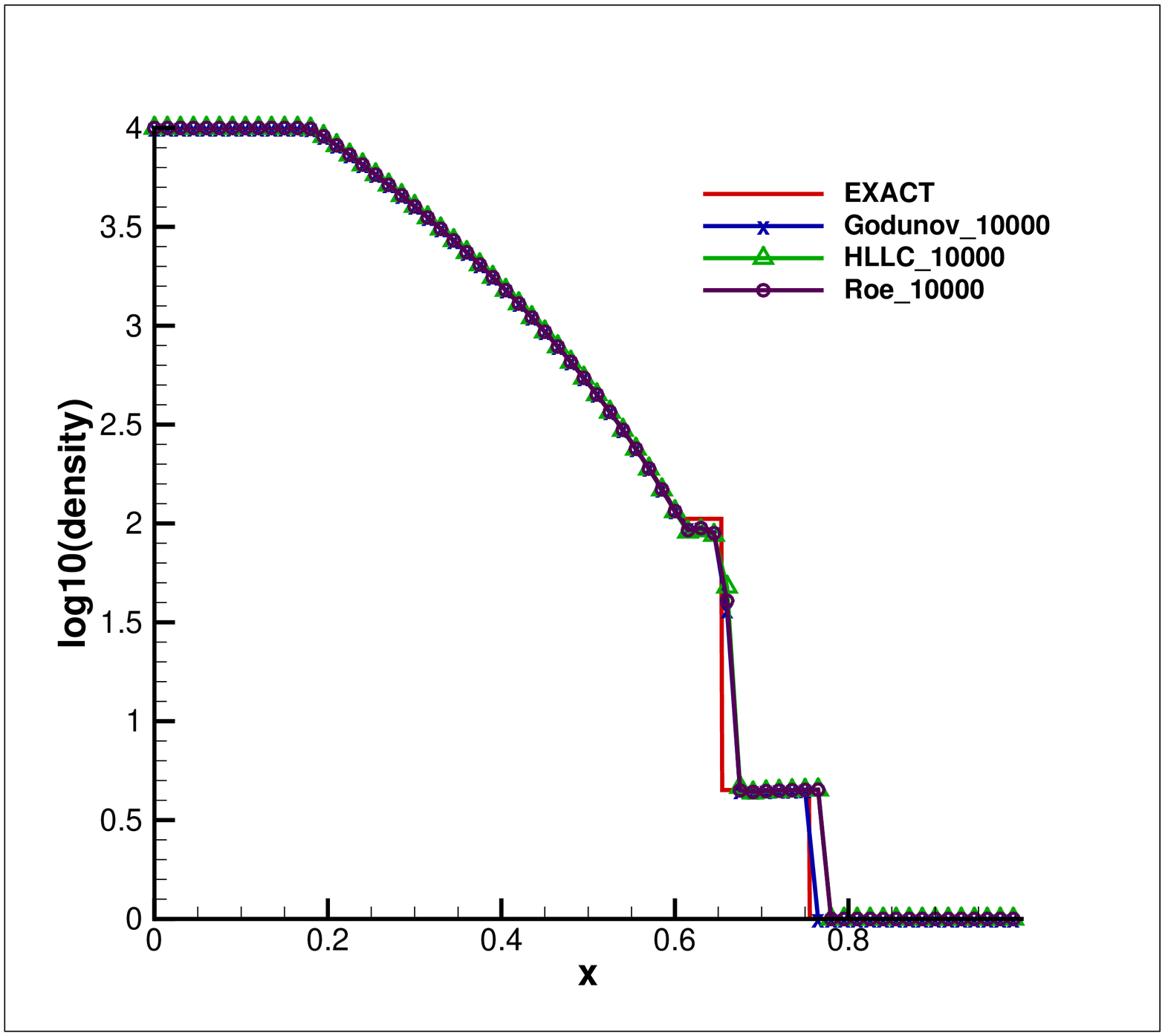}
 } \hspace{1cm}
  \subfigure[Zoomed solution of (c)]{
  \includegraphics[width=2.5in]{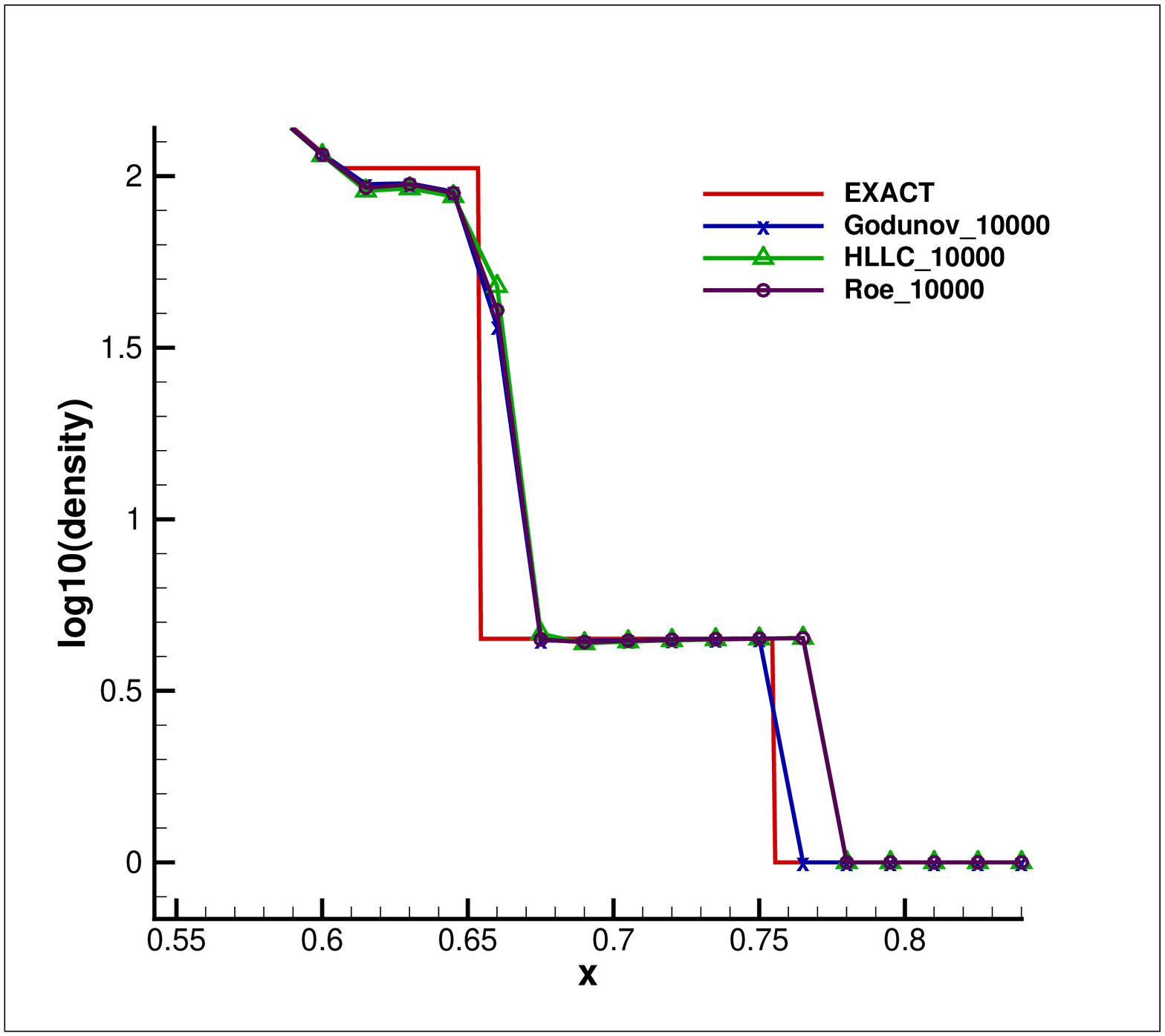}  }

  \caption{The numerical solutions computed by the  first order schemes (with the exact, HLLC, Roe Riemann solvers) are compared with the exact solution (only 66 cells are shown).}\label{Fig-first-order}
\end{figure}

We proceed to look into the situation that schemes have accuracy of second order in space and first order in time, shown in Figure \ref{Fig-second-first}.  At each time step, the MUSCL type data reconstruction is adopted, but only first order flux solvers are used. It is observed that there is no significant improvement at all and even oscillations are present near the contact discontinuities.  The large disparities of numerical solutions from the exact solution result from the inaccurate resolution of rarefaction waves and the presence  oscillations are due to the inconsistency of spatial and temporal accuracy.

\vspace{0.2cm}

\begin{figure}[htb]
  \subfigure[$1000$ grid points ]{
  \includegraphics[width=2.5in]{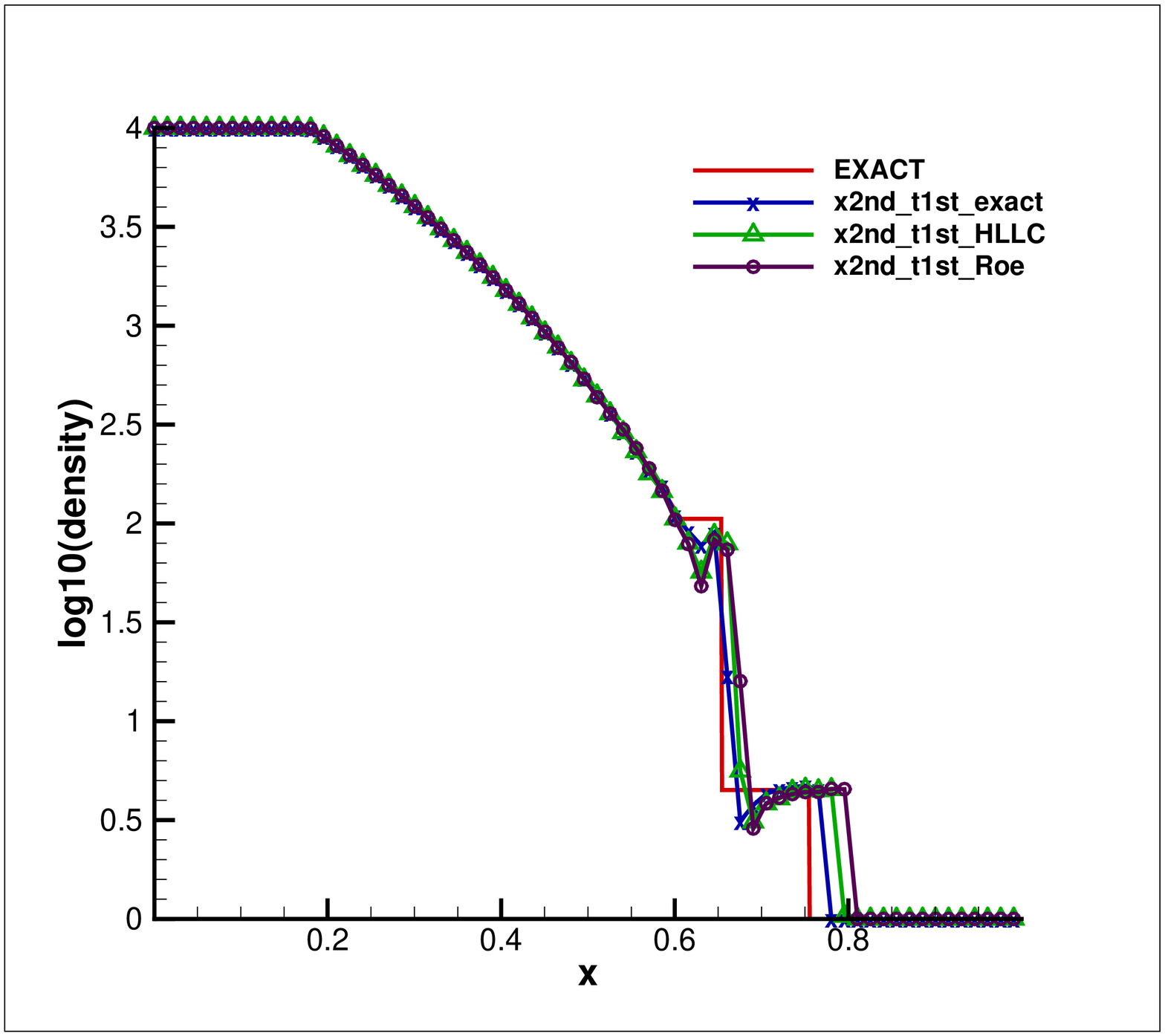}}
\hspace{1cm}
\subfigure[Zoomed solution]{  \includegraphics[width=2.5in]{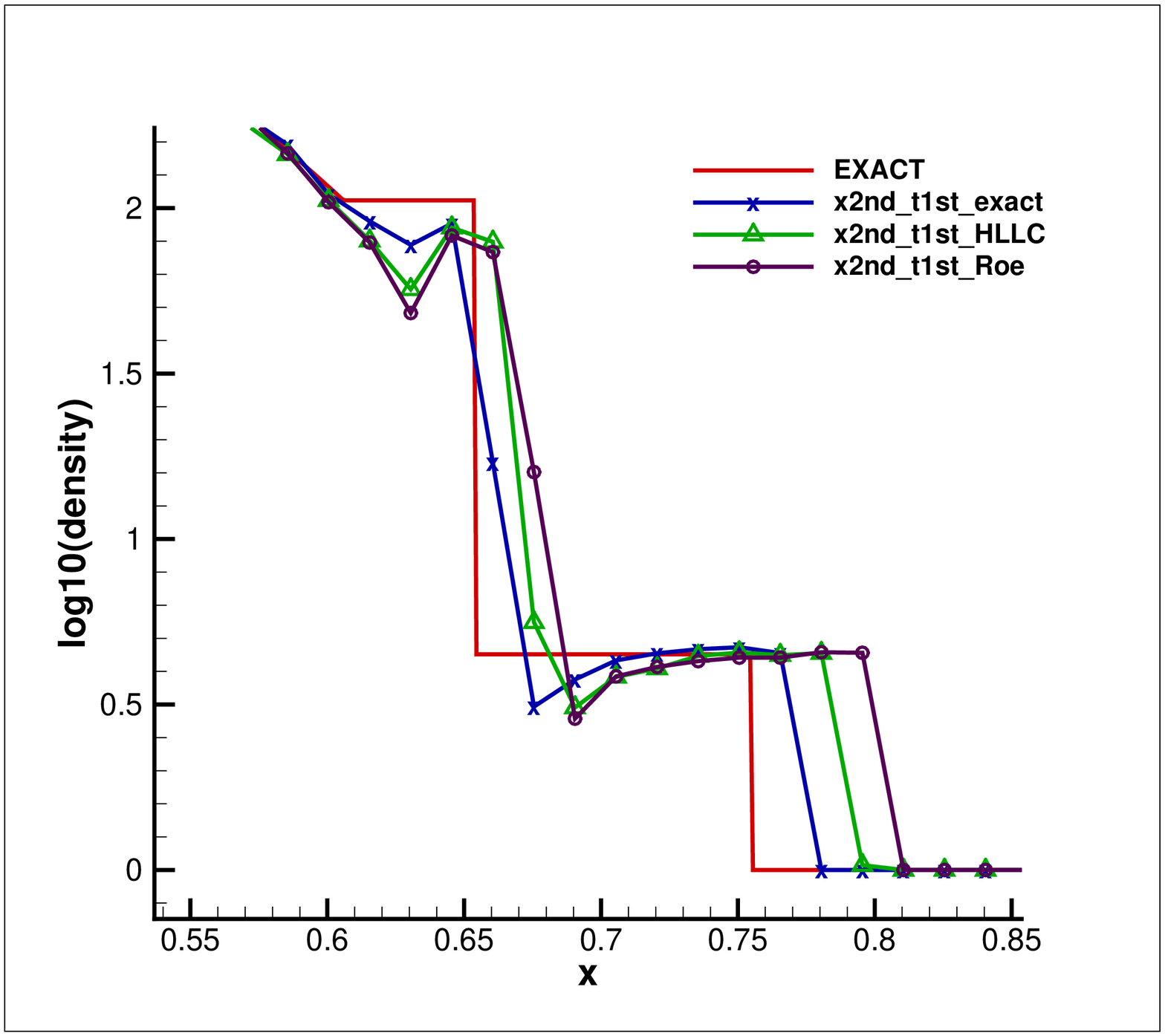} }
  \caption{The numerical solutions computed by the schemes with second order in space and first order in time (with the exact, HLLC, Roe Riemann solvers and $1000$ cells) are compared with the exact solution (only 66 cells are shown). }
  \label{Fig-second-first}
\end{figure}

\begin{figure}
  \subfigure[$1000$ grid points ]{
  \includegraphics[width=2.5in]{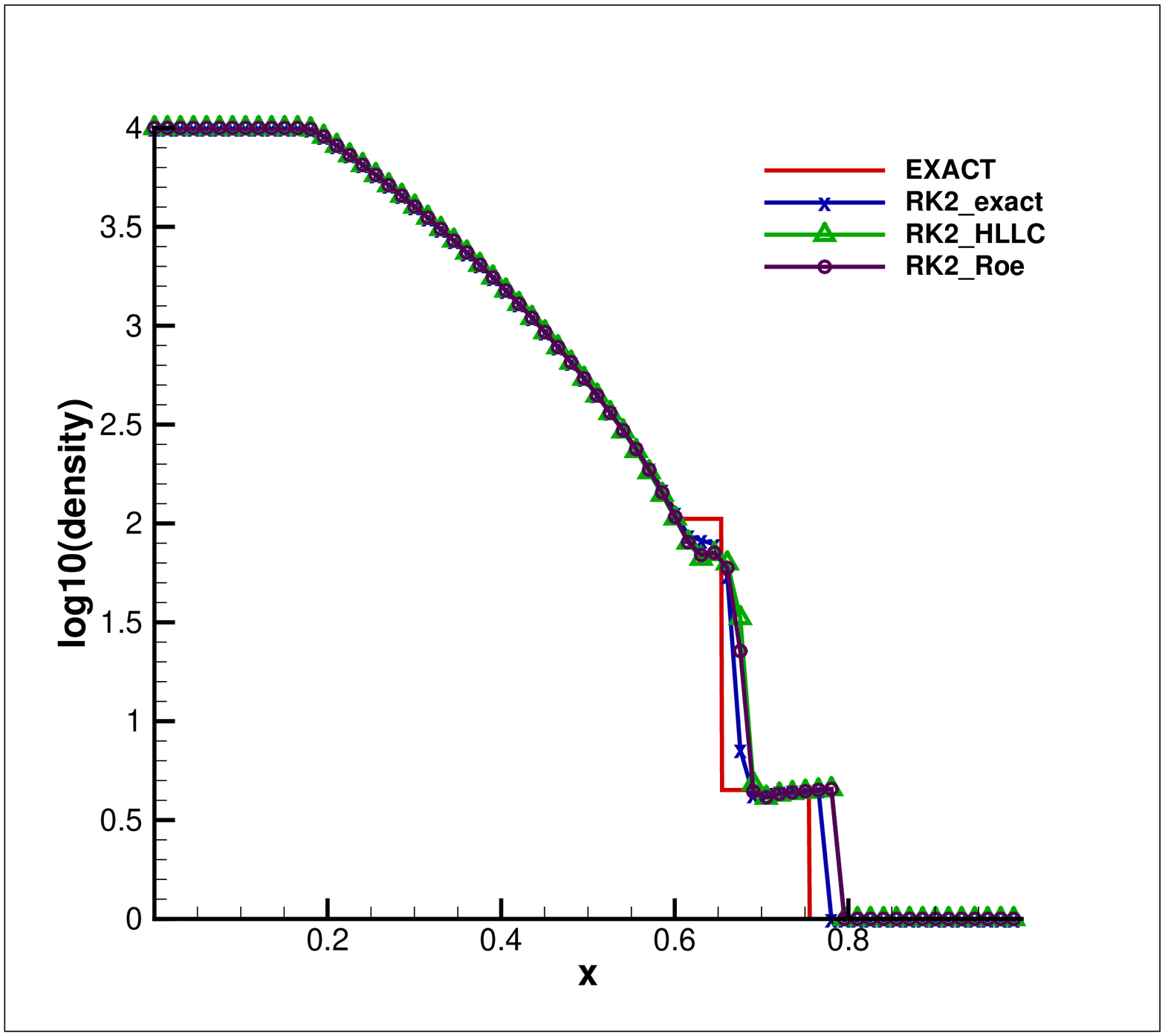}}
\hspace{1cm}
\subfigure[Zoomed solution]{  \includegraphics[width=2.5in]{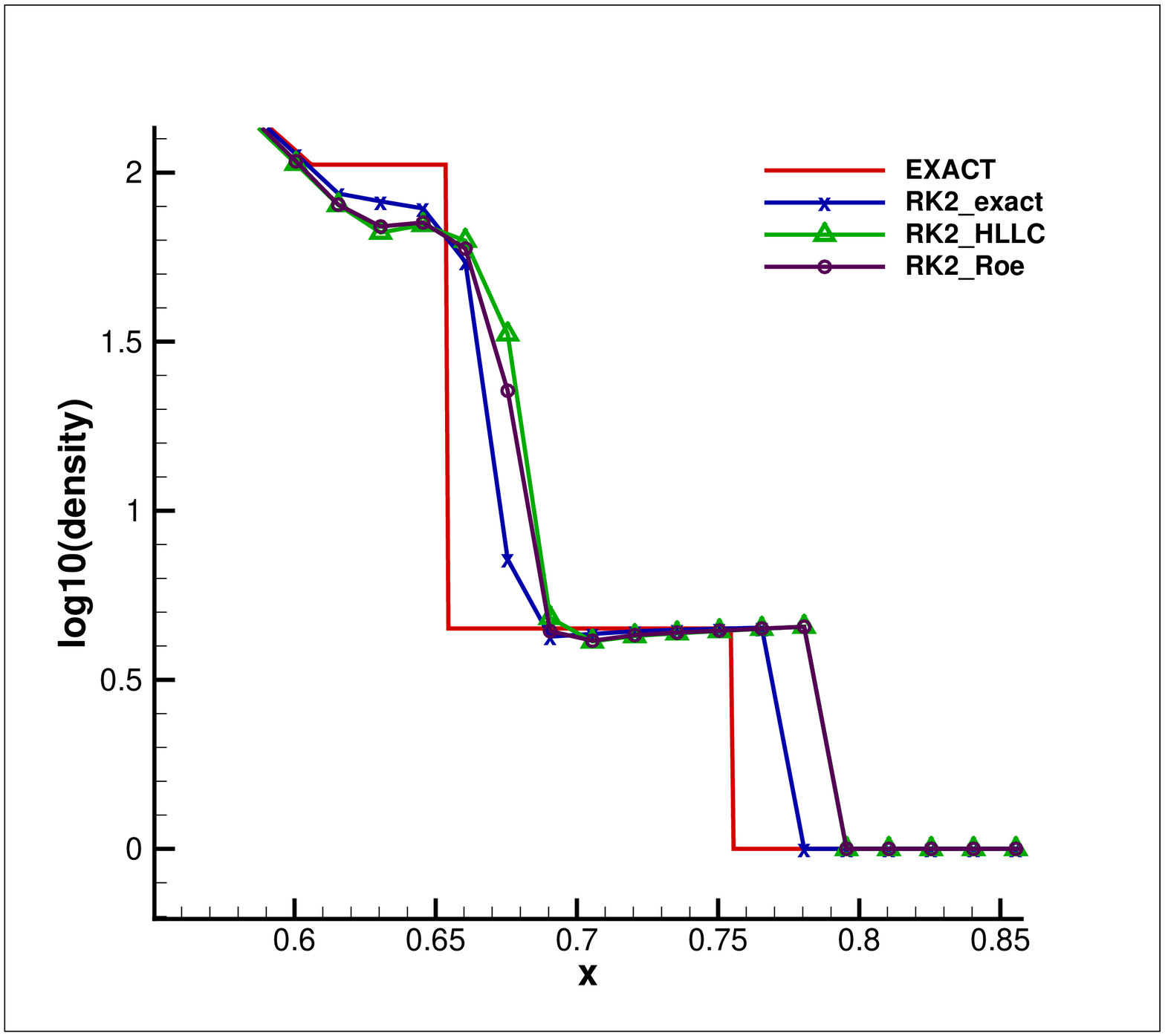} }
  \caption{The numerical solutions computed by the second order R-K schemes (with the exact, HLLC, Roe Riemann solvers and 1000 cells) are compared with the exact solution (only 66 cells are shown). }
\label{Fig-second-RK}
\end{figure}

In Figure \ref{Fig-second-RK}, we keep the MUSCL type data reconstruction at each time step, but increase the temporal accuracy to second order accuracy.  We first use the two-stage Rung-Kutta temporal iteration with first order flux solvers as building blocks. The result is improved a little bit but not prominently.   Hence we make a trial to execute  a one-stage method with acoustic GRP solver (consistent with   ADER \cite{Toro-ADER}).   The result is displayed in Figure \ref{Fig-second-acoustic}. The result is not satisfactory until the grid points are taken to be $10^4$, and even the oscillations near the edge of the rarefaction wave are more severe.
\vspace{0.2cm}

\begin{figure}
  \subfigure[Acoustic GRP solver with different grid points ]{
  \includegraphics[width=2.5in]{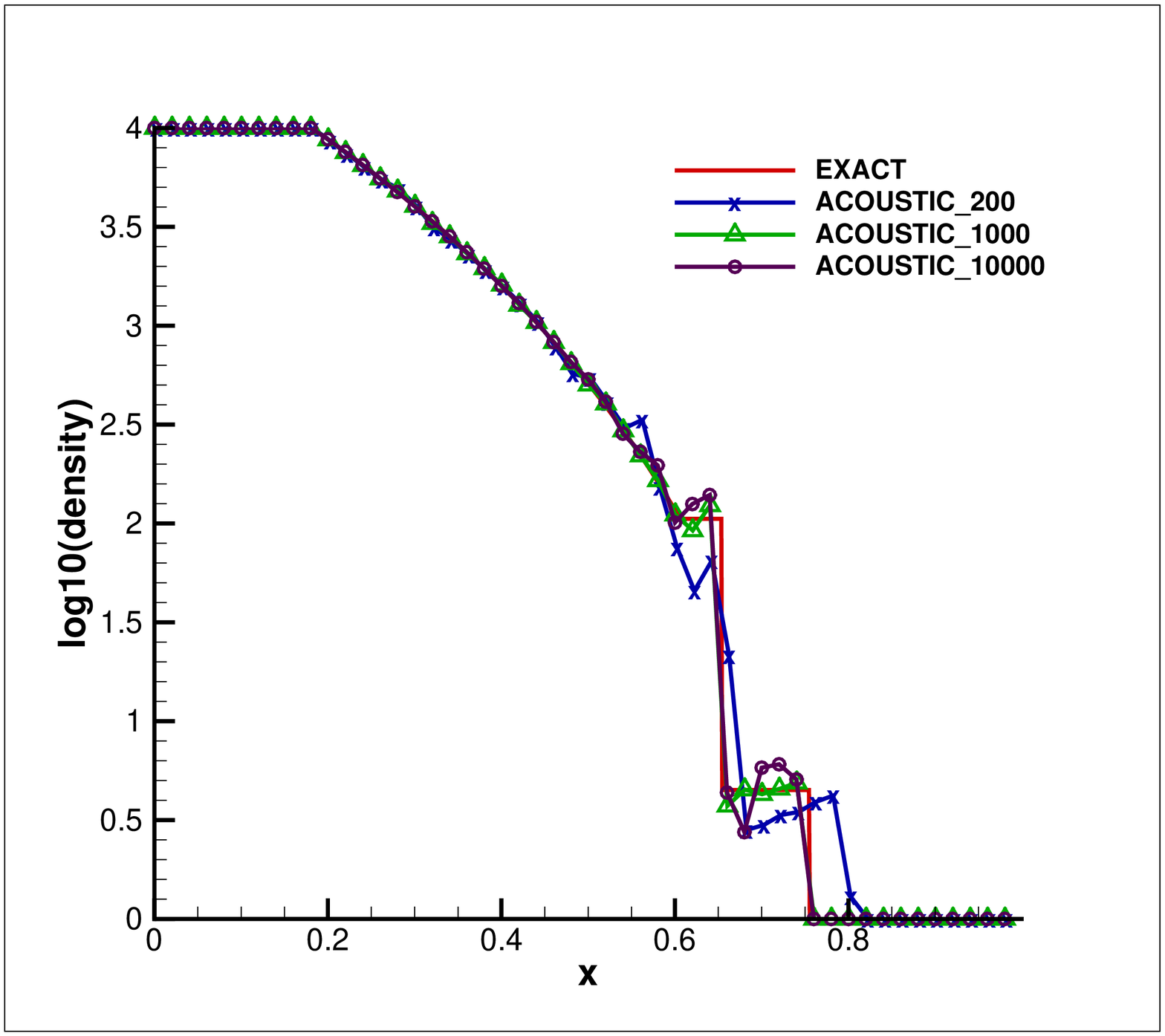}}
\hspace{1cm}
\subfigure[Zoomed solution]{  \includegraphics[width=2.5in]{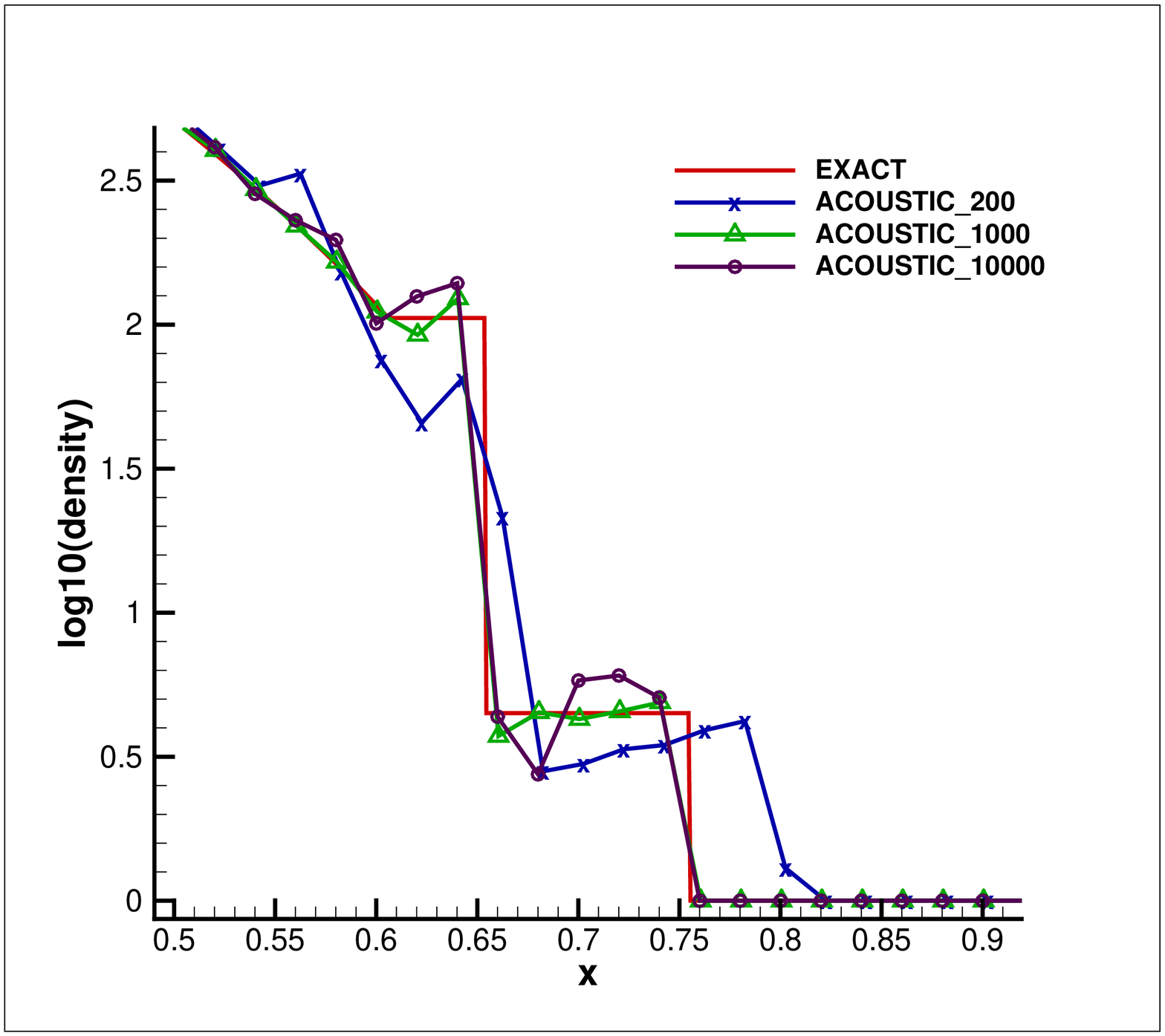} }
  \caption{The numerical solutions computed by the second order acoustic GRP solver (with the exact Riemann solver) are compared with the exact solution (only 66 cells are shown). }
  \label{Fig-second-acoustic}
\end{figure}

Finally, we use the nonlinear GRP solver in the strong wave regions to simulate this problem with $100$, $200$ and $300$ grid points, respectively.   The result is displayed in Figure \ref{Fig-GRP}.  We can see that the computation with $300$ grid points can effectively cope with the resolution of strong waves.

\begin{figure}[!ttb]
  \subfigure[GRP with relatively small number of  grid points ]{
  \includegraphics[width=2.5in]{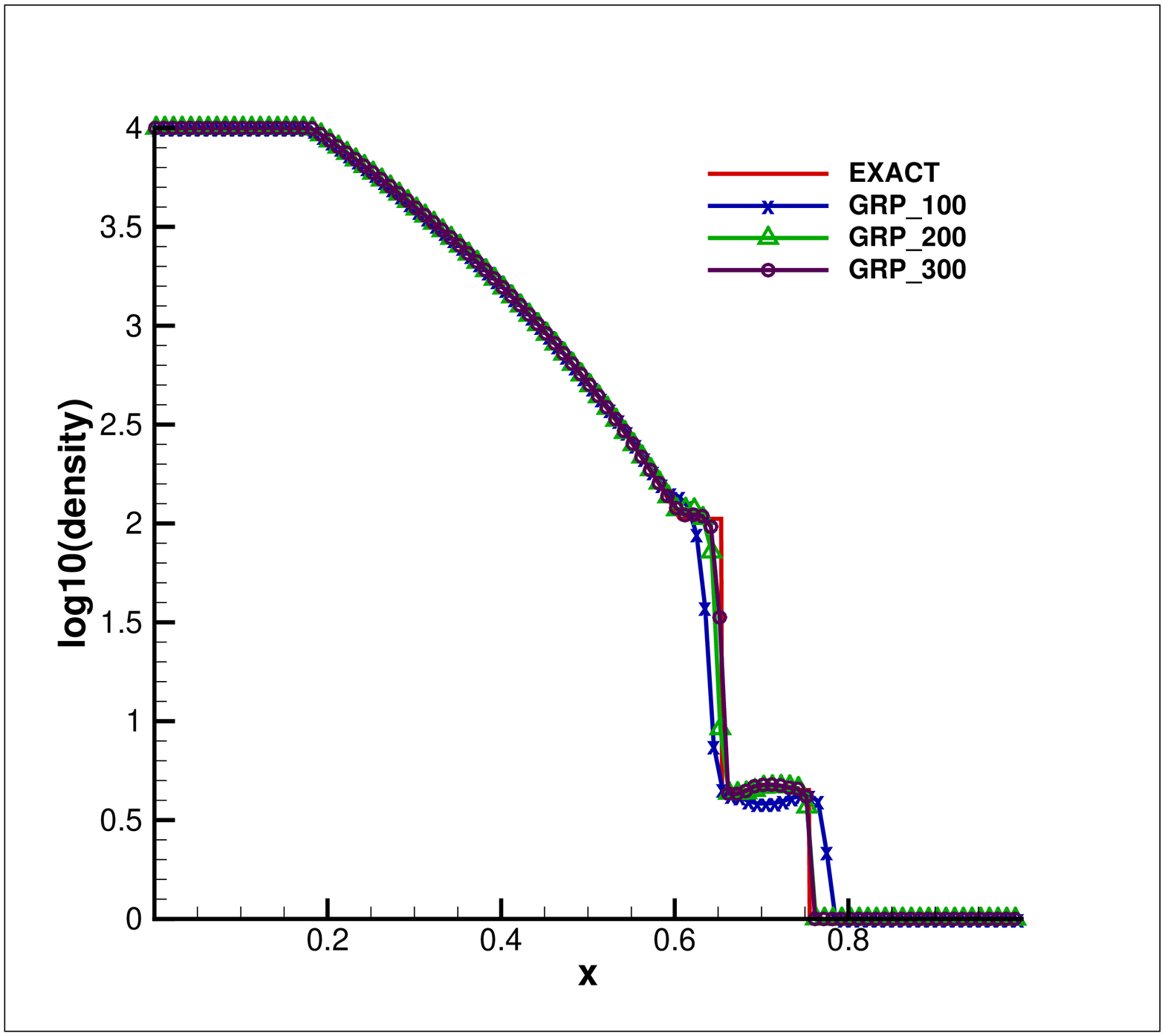}}
\hspace{1cm}
\subfigure[Zoomed solution]{  \includegraphics[width=2.5in]{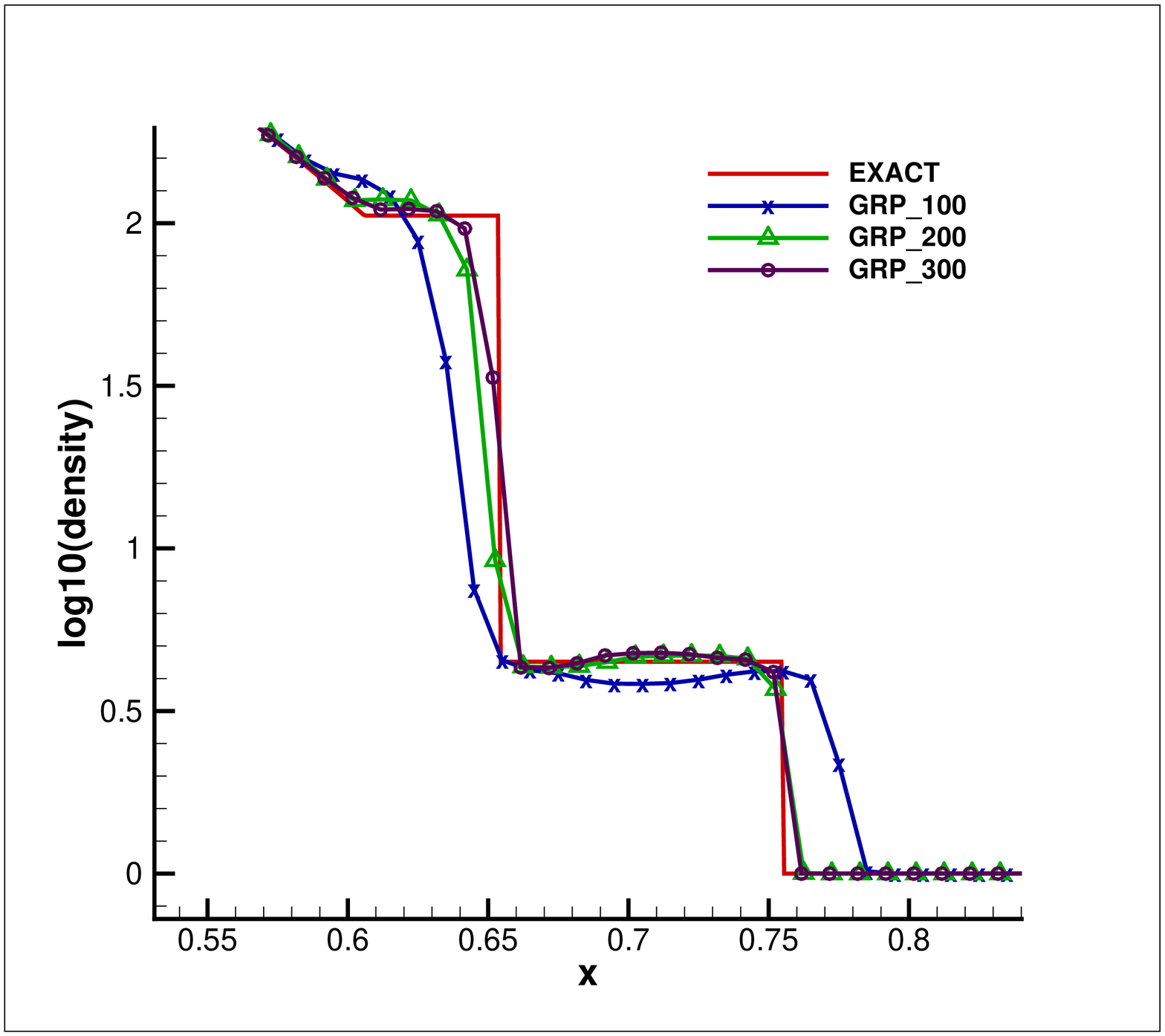} }
\caption[small]{The GRP simulation (only 100 cells are shown)}
\label{Fig-GRP}

\end{figure}


%
%

 \section{More remarks}

 In this paper, we refine the GRP solver and highlight the thermodynamic effect on the simulation of strong waves. Although this work is done just for compressible Euler equations, the conclusion is of general significance.  As for the use of GRP solver and its extension,   some remarks are in order.
 \vspace{0.2cm}

 \begin{enumerate}
 \item[(i)]  There are two versions of the GRP solver: Acoustic and nonlinear.  In smooth regions of flows or weak waves, just the acoustic solver is needed. However, once nonlinear waves become strong or thermodynamic effect becomes significant, the nonlinear GRP solver has to be used. It turns out that in our simulations, the acoustic solver is used for most of the computational time, and the nonlinear GRP solver works just near strong waves.

 \item[(ii)] We just refine the 1-D GRP solver with a single physical effect in the present paper.  As more physical factors are included, the GRP solver can be derived similarly. To be more precise, these factors take effect on the kinematical-thermodynamic variables and the equation  \eqref{psi-eq} is replaced by
 \begin{equation}
 \dfr{\pt \psi}{\pt t} +(u+c)\dfr{\pt\psi}{\pt x} = cK\dfr {\pt s}{\pt x} +G_1+G_2 +\cdots,
 \end{equation}
 where $G_1$, $G_2$, $\cdots$, represent the aforementioned physical effects. It turns out that in \eqref{u-p-rare} $d_L$ is replaced by $\td d_L$,
 \begin{equation}
 \td d_L =d_L + \sum_{i} d_{iL},  \ \ \ d_{iL}= \dfr{1}2 \dfr{\Theta(\beta_L)}{\Theta(\beta)} \int_{\beta_L}^{\beta} \dfr{1}{2c(0,\xi)} \dfr{\Theta(\xi)}{\Theta(\beta_L)} G_i(0,\xi)d\xi.
 \end{equation}
 The integrals for $d_{iL}$ can be evaluated either precisely (if possible) or numerically.  In parallel, $d_R$  should include other physical effects through the Lax-Wendroff methodology.
 \vspace{0.2cm}

 \item[(iii)]  For a specific equation of state, the Gibbs relation has the corresponding implication.  For example, in chemical thermodynamics \cite{Eu},   the internal energy comprises of a static energy $e_0$, the chemical potential $e_i$
 \begin{equation}
 e=e_0 + e_i,
 \end{equation}
 Then the Gibbs relation becomes
 \begin{equation}
 Tds =de_0 +d e_i -\dfr{p}{\rho^2}d\rho.
 \end{equation}
 Recall that in the computation of entropy variation, any initial variation   in \eqref{en-ini} is reflected in the corresponding numerical fluxes, and so does   chemical potential  $de_i$. This is an example how the GRP solver includes additional effects in practical applications.

 \vspace{0.2cm}

 \item[(iv)] The role of GRP solver for second order schemes is just like that of the exact Riemann solver for first order schemes, but it has its own significance. We can regard the schemes based on the GRP solver as discontinuous Lax-Wendroff schemes.  The spatial-temporal coupling can effectively absorb the full (physical)  information provided by the governing equations, which is the spirit of the Cauchy-Kowalevski methodology.

 \end{enumerate}

\section*{Acknowledgement}

Jiequan Li is supported by by NSFC (No. 11371063, 91130021),  the doctoral program from the Education Ministry of China (No. 20130003110004). Yue Wang is supported by NSFC (No. 11501040).

\newpage
\centerline{TABLE I: Basic notations}

\begin{center}
\begin{tabular}{ll}
\hline \cline{1-2}
 Symbols & Definitions\\
 \cline{1-2}
$\rho$, $u$, $p$, $s$\ \ \ \ \ \ \ \ \ \  & density, velocity, pressure, entropy\cr
$\phi$, $\psi$ & kinematic-thermodynamical variables\\
 $\bu_L, \bu_R$  \ \ \ \ & $\lim \bu(x,0)$ as
$x\rw 0_-$,  $x\rw 0_+$\\
 $\bu'_L$, $\bu'_R$  & constant slopes $\dfr{\pt \bu}{\pt
x}$ for $x<0$,   $x>0$\\
$R^A(\cdot; \bu_L,\bu_R)$ & solution of the Riemann problem subject to data $\bu_L$, $\bu_R$\\
$\bu_*$ & $R^A(0;\bu_L,\bu_R)$ \\
$\bu_{1*}$, $\bu_{2*}$ & the value of $\bu$ to the left, the right of contact
discontinuity\\
$\bu_-(x,t)$, $\bu_+(x,t)$ &  the solution in the left,   the
right\\
 $\left(\dfr{\pt \bu}{\pt t}\right)_*$ & $\dfr{\pt
\bu}{\pt t}(x,t)$
at $x=0$ as  $t\rw 0_+$\\
$D_0\bu/Dt$ & the material derivative of $\bu$, $\dfr{\pt \bu }{\pt t}+u\dfr{\pt \bu }{\pt x}$\\
$(D_0\bu/Dt)_*$ & the limiting value of $D_0\bu/Dt$ at $x=0$ as $t\rw
0_+$\\
$u-c$, $u$, $u+c$ & three eigenvalues\\
$\beta$, $\al$ & two characteristic coordinates \\
$\sg_L$, $\sg_R$ & shock speed at time zero, corresponding to
$u-c$, $u+c$\\
 $\mu^2=\dfr{\gm-1}{\gm+1}$ & $\gm>1$ the
polytropic index,
$\gm=1.4$ for air\\
\cline{1-2}
\end{tabular}
\end{center}
\vspace{0.2cm}

\end{document}